\documentclass[12pt]{amsart}
\usepackage{latexsym,amssymb,amsmath}
\usepackage[usenames]{color}
\usepackage{xcolor}
\usepackage{tikz-cd}
\usepackage{mathtools}
\usepackage{verbatim} 
\usepackage[all]{xy}
\usepackage[latin1]{inputenc}
\usepackage[colorlinks=true]{hyperref}
\usepackage{thmtools}

\textwidth=16.00cm
\textheight=22.00cm
\topmargin=0.00cm
\oddsidemargin=0.00cm 
\evensidemargin=0.00cm
\headheight=0cm
\headsep=1cm
\headsep=0.5cm 
\numberwithin{equation}{section}
\hyphenation{semi-stable}
\setlength{\parskip}{3pt}

\newtheorem*{convention}{Convention}
\newtheorem{theorem}{Theorem}[section]
\newtheorem{lemma}[theorem]{Lemma}
\newtheorem{proposition}[theorem]{Proposition}
\newtheorem{corollary}[theorem]{Corollary}

\newtheorem{theoremx}{Theorem}

\theoremstyle{definition}

\theoremstyle{definition}
\newtheorem{definition}[theorem]{Definition} 
 
\newtheorem{remark}[theorem]{Remark}
\newtheorem{example}[theorem]{Example}

\newtheorem{notation}[theorem]{Notation}


\newcommand{\NN}{\mathbb{N}}

\newcommand{\QQ}{\mathbb{Q}}

\newcommand{\KK}{\mathbb{K}}

\DeclareMathOperator{\depth}{{depth}}

\DeclareMathOperator{\Spec}{{Spec}}

\DeclareMathOperator{\Ass}{{Ass}}
\DeclareMathOperator{\Char}{{Char}}
\DeclareMathOperator{\Min}{{Min}}

\DeclareMathOperator{\Proj}{{Proj}}

\DeclareMathOperator{\reg}{{reg}}

\DeclareMathOperator{\Ann}{{Ann}}

\DeclareMathOperator{\aaa}{{a}}
\DeclareMathOperator{\bb}{{b}}
\DeclareMathOperator{\cc}{{c}}

\DeclareMathOperator{\e}{{e}}

\newcommand{\m}{\mathfrak{m}}
\newcommand{\cF}{\mathcal{F}}
\newcommand{\p}{\mathfrak{p}}
\newcommand{\q}{\mathfrak{q}}
\newcommand{\cA}{\mathcal{A}}
\newcommand{\cV}{\mathcal{V}}

\begin{document}


\title[Regularity index of the generalized minimum distance function]{Regularity index of the generalized minimum distance function}  


\author[Carlos Espinosa-Vald\'ez]{Carlos Espinosa-Vald\'ez$^{\aaa}$}
\address{Carlos Espinosa-Vald\'ez\\
Centro de Investigaci\'on en Matem\'aticas\\ Guanajuato, Gto., M\'exico.
}
\thanks{$^{\aaa}$ The first author was supported by CONACyT Grant 857813.}
\email{carlos.espinosa@cimat.mx }

\author[Luis N\'u\~nez-Betancourt]{Luis N\'u\~nez-Betancourt$^{\bb}$}
\address{Luis N\'u\~nez-Betancourt \\ Centro de Investigaci\'on en Matem\'aticas\\ Guanajuato, Gto., M\'exico.}
\thanks{$^{\bb}$ The second author was partially supported by CONACyT Grant 284598,  C\'atedras Marcos Moshinsky, and SNI, Mexico.}
\email{luisnub@cimat.mx}

\author[Yuriko Pitones]{Yuriko Pitones$^{\cc}$}
\address{Yuriko Pitones\\ 
Universidad Aut\'onoma Metropolitana, Unidad Iztapalapa.}
\thanks{$^{\cc}$ The third author was partially supported by SNI, Mexico.}
\email{ypitones@xanum.uam.mx }

\keywords{Generalized Hamming weights, Castelnuovo--Mumford regularity, regularity index.}
\subjclass[2020]{Primary 11T71, 13D40; Secondary 13H10, 13P25, 14G50.}  

\maketitle 

\parindent=8mm

\begin{abstract}
We show that the generalized minimum distance function is non-increasing as the degree varies for reduced standard graded algebras over a field. This allows us to define its regularity index and its stabilization value.  The stabilization value is computed  for every cases. We study how the regularity index varies as the number of polynomial increases,  and use this to give bounds for it.
\end{abstract}

\section{Introduction}\label{Intro}

The minimum distance of the Reed-Muller codes is a parameter that 
is related with the error correction conditions \cite{G, Geil_2012}. Mart\'inez-Bernal, Pitones and Villarreal \cite{MBPV} introduced an analogue algebraic invariant for a homogeneous ideal in a standard graded polynomial ring, that is used to compute the minimum distance for certain families of codes. In this manuscript, we study the generalized Hamming 
distance and its algebraic version \cite{GSMBVV}, 
however, determining this distance is a difficult problem. Wei \cite{Wei91} studied the generalized Hamming distance and his results imply that the generalized Hamming distance completely characterizes the behavior of the code.


The $\ell$-th generalized Hamming distance of a $\KK$-linear code $C$ is 
$$
\delta_{\ell}(C)=\min_{D\subseteq C\; \&\; \dim_\KK D=\ell}^{}  \mid \chi(D)\mid,
$$
where $\chi(D)=\{i\mid \exists (v_1,\ldots, v_n)\in D, v_i\neq 0\}$.

With this motivation in mind, the generalized minimum distance function \cite{GSMBVV} 
of a homogeneous ideal $I\subseteq S=\KK[x_1,\ldots,x_n]$ is defined by,
$$
\delta_I(t,\ell)=\left\{\begin{array}{ll}\e(S/I)-\max\{\e(S/(I,F))\vert\,
F\in\mathcal{F}_{t,\ell}\}&\mbox{if }\mathcal{F}_{t,\ell}\neq\varnothing,\\
\e(S/I)&\mbox{if\ }\mathcal{F}_{t,\ell}=\varnothing.
\end{array}\right.
$$
Where,
$$\mathcal{F}_{t,\ell}=\{F=\{f_1,\ldots, f_\ell\}\subseteq S_t \mid f_1,\ldots, f_{\ell} \text{ are } \mathbb{K}\text{-linearly independent in } S/I, (I\colon (F))\neq I\}.$$

We note that this numerical invariant depends only of the structure of $R=S/I$ as a graded algebra, and not on its presentation (see Definition \ref{Def MD Algebras}). We chose the approach of finitely generated algebras to obtain results that involves the geometry of $\Proj(R)$ without considering the embedding into a projective space.

If $\ell=1$, several properties of the behavior of the minimum distance function have been obtained 
\cite{MBPV,NBPV,MBPV-CI}. In particular, it is known that $\delta_R (t,1)$ eventually stabilizes for 
radical ideals. Furthermore, there are bounds of its regularity index, which is the value where 
$\delta_R (t,1)$ stabilizes \cite{NBPV}.

If there exists a linear form that is a nonzero divisor for $R$, then $\delta_R(t,\ell)$ stabilizes
for $t\gg 0$ \cite{GSMBVV}. We generalized this result for reduced graded algebras. 

\begin{theoremx}[{\autoref{ThmStabilization}}]\label{MainStab}
If $R$ is a reduced graded algebra,  then $\delta_R (t,\ell)\geq \delta_R (t+1,\ell)$.
Consequently, $\delta_R (t,\ell)$ stabilizes for $t\gg 0$.
\end{theoremx}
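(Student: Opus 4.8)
The plan is to prove the single inequality $\delta_R(t,\ell)\ge \delta_R(t+1,\ell)$; the stabilization is then immediate, because $\{\delta_R(t,\ell)\}_t$ will be a non-increasing sequence of nonnegative integers. Here nonnegativity holds since, under the convention fixed in Definition~\ref{Def MD Algebras}, $\e(S/(I,F))$ records only the part of $V(I,F)$ of dimension $d:=\dim R$, and those components form a subfamily of the top-dimensional components of $V(I)$, so $\e(S/(I,F))\le \e(R)$. Write $I\subseteq S$ for a defining ideal of $R=S/I$ and $\Min(I)=\{\p_1,\dots,\p_s\}$; as $R$ is reduced these are exactly the associated primes and $I=\p_1\cap\cdots\cap\p_s$. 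Unwinding the definition, $F=\{f_1,\dots,f_\ell\}$ lies in $\mathcal{F}_{t,\ell}$ precisely when the $f_i$ are linearly independent modulo $I$ and $F\subseteq\p_i$ for some $i$. Since $\delta_R(t,\ell)=\e(R)-\max\{\e(S/(I,F)):F\in\mathcal{F}_{t,\ell}\}$, it suffices to produce, from a maximizer $F\in\mathcal{F}_{t,\ell}$, some $F'\in\mathcal{F}_{t+1,\ell}$ with $\e(S/(I,F'))\ge\e(S/(I,F))$.

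First I would fix a linear form $h\in S_1$ that is a nonzerodivisor on $R$, i.e. $h\notin\bigcup_i\p_i$. Such an $h$ exists after harmlessly passing to an infinite field: since $\dim R\ge 1$ none of the $\p_i$ equals the irrelevant ideal $\m$, so graded prime avoidance applies. Set $F'=hF=\{hf_1,\dots,hf_\ell\}\subseteq S_{t+1}$. I would then check $F'\in\mathcal{F}_{t+1,\ell}$. If $\sum c_ihf_i\in I$, then $h\bigl(\sum c_if_i\bigr)\in I$, and as $h$ is a nonzerodivisor on $R$ this forces $\sum c_if_i\in I$, whence all $c_i=0$ by independence of the $f_i$; so the $hf_i$ are independent modulo $I$. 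The colon condition is automatic, since $gf_i\in I$ implies $ghf_i\in I$, giving $(I:(F))\subseteq(I:(hF))$ and hence $(I:(hF))\ne I$.

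The heart of the argument is the equality $\e(S/(I,hF))=\e(S/(I,F))$, which I would establish by the associativity formula for multiplicity, localizing at the dimension-$d$ primes. A prime $\q$ of dimension $d$ minimal over $(I,F)$ (resp. over $(I,hF)$) contains $I$, hence contains some $\p_j$; equality of dimensions forces $\q=\p_j$, a top-dimensional minimal prime of $I$ containing $F$ (resp. $hF$). For such a $\p_i$ we have $h\notin\p_i$, so $hf_1,\dots,hf_\ell\in\p_i$ if and only if $f_1,\dots,f_\ell\in\p_i$; thus $(I,F)$ and $(I,hF)$ have exactly the same minimal primes of dimension $d$, namely the top-dimensional $\p_i$ with $F\subseteq\p_i$. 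Finally, at each such $\p_i$ reducedness gives $I_{\p_i}=\p_iS_{\p_i}$, and since $F\subseteq\p_i$ both $(I,F)_{\p_i}$ and $(I,hF)_{\p_i}$ equal $\p_iS_{\p_i}$, so the local multiplicity is $1$ in either case. Summing $\e(S/\p_i)$ over the common index set yields $\e(S/(I,hF))=\sum_{\,\p_i\supseteq F,\ \dim S/\p_i=d}\e(S/\p_i)=\e(S/(I,F))$.

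Combining the two preceding paragraphs, $F'=hF\in\mathcal{F}_{t+1,\ell}$ realizes $\e(S/(I,F'))=\e(S/(I,F))$, so the maximum defining $\delta_R(t+1,\ell)$ is at least the one defining $\delta_R(t,\ell)$, i.e. $\delta_R(t,\ell)\ge\delta_R(t+1,\ell)$. Being non-increasing, integer-valued, and bounded below by $0$, the sequence stabilizes for $t\gg 0$, which is the second assertion. The step I expect to be most delicate is the multiplicity computation: one must verify that multiplying by $h$ creates no new dimension-$d$ components and inflates no local length, and it is precisely reducedness (local length $1$ at each minimal prime) together with $h$ avoiding the top-dimensional primes that keeps $\e$ unchanged. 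The only other point requiring care is producing the linear nonzerodivisor, which is why one passes to an infinite field; the finite-field case should be reducible to this one by a base-change argument, since the quantities $\e(S/\p_i)$ entering the computation are preserved.
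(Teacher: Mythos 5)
Your proposal is correct and takes essentially the same route as the paper: given an optimal $F\in\mathcal{F}_{t,\ell}$, multiply it by a linear nonzerodivisor (obtained by prime avoidance after extending to an infinite field) and use the additivity formula together with reducedness (local length $1$ at each top-dimensional minimal prime, and the same top-dimensional minimal primes contain $F$ and $hF$) to conclude the multiplicity, hence the maximum, does not drop in degree $t+1$. The base-change argument you defer in your final sentence is precisely the paper's \autoref{dis.ig}, which passes to $\tilde{R}=R\otimes_{\KK}\KK(y)$ and proves $\delta_{R}(t,\ell)=\delta_{\tilde{R}}(t,\ell)$.
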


As a consequence of \autoref{MainStab}, one can study the value where the generalized minimum distance stabilizes, denoted by $s_{R}$, and its regularity index, denoted by $r_{R}$.
In general, it is a difficult problem  to compute these values, even if  $\KK$ is a finite field.  If $\ell= 1$, this coincides with computing the minimum distance for linear codes. 
 In \autoref{ThmStabValue}, we compute the stabilization values of $\delta_{R}(t,\ell)$ and study the behavior of the regularity index with respect to the growth of $\ell$.

\begin{theoremx}[\autoref{ThmIneqRI}]\label{MainReg}
If $R$ is a reduced graded algebra with $\depth(R)\geq 2$,  then $r_{R}(\ell+1)\leq r_{R}(\ell) +1$.
\end{theoremx}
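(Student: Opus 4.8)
The plan is to translate $\delta_R$ into a statement about Hilbert functions of the ideals cut out by the irreducible components of $\Proj(R)$, and then to extract the inequality from a single structural property of these Hilbert functions that is governed by the hypothesis $\depth(R)\ge 2$.

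Write $I=\bigcap_{j=1}^m \mathfrak p_j$ for the minimal primes of maximal dimension, put $d_j=\deg(S/\mathfrak p_j)$ so that $\e(R)=\sum_j d_j$, and for $T\subseteq\{1,\dots,m\}$ set $J_T=\bigcap_{j\in T}\mathfrak p_j$ and $\varphi_T(t)=\dim_\KK (J_T/I)_t$. A first step is to check, over an arbitrary field, the identity
\[
\delta_R(t,\ell)=\e(R)-\max\Big\{\textstyle\sum_{j\in T}d_j \;:\; T\subsetneq\{1,\dots,m\},\ \varphi_T(t)\ge \ell\Big\},
\]
the point being that a family $F\in\mathcal F_{t,\ell}$ has $\e(S/(I,F))=\sum_{j\in T}d_j$ exactly when $T$ is the set of maximal components contained in $V(F)$ and the $\ell$ forms are independent in $(J_T/I)_t$. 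Since $\varphi_T(t)\to\infty$ for every proper $T$, the stable value is $s_R(\ell)=\min_j d_j$ and the optimal sets are $T_k=\{1,\dots,m\}\setminus\{k\}$ with $d_k$ minimal — a finite list that does not depend on $\ell$. Writing $\hat r(T,\ell)=\min\{t:\varphi_T(t)\ge\ell\}$, this reduces the theorem to the identity $r_R(\ell)=\min_{k}\hat r(T_k,\ell)$ together with the per-$T$ estimate $\hat r(T_k,\ell+1)\le \hat r(T_k,\ell)+1$.

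The heart of the argument, and the place where $\depth(R)\ge2$ is used, is the following growth of $\varphi_T$. As all quantities involved are invariant under base field extension, I may assume $\KK$ infinite and choose a linear nonzerodivisor $h$ on $R$. Applying the depth lemma to $0\to J_T/I\to R\to S/J_T\to 0$, and using that $S/J_T$ is reduced of positive dimension (hence of depth $\ge 1$), gives $\depth(J_T/I)\ge\min\{\depth R,\ \depth(S/J_T)+1\}\ge 2$. Consequently $N/hN$ has positive depth, where $N=J_T/I$, so its Hilbert function is nonzero in every degree $\ge\indeg(N)$; the exact sequence $0\to N(-1)\xrightarrow{\,h\,}N\to N/hN\to0$ then yields $\varphi_T(t)-\varphi_T(t-1)=\dim_\KK(N/hN)_t\ge 1$ for all $t\ge\indeg(N)$. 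Hence once $\varphi_T(t)\ge\ell$ (which forces $t\ge\indeg(N)$) we obtain $\varphi_T(t+1)\ge\ell+1$, that is $\hat r(T,\ell+1)\le\hat r(T,\ell)+1$. I expect this depth computation — more precisely, turning $\depth(R)\ge2$ into the strict one-per-degree growth of $\varphi_T$ — to be the main obstacle; the case $\depth(R)=1$ would only give monotonicity of $\varphi_T$, which is not enough.

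Finally I assemble the two reductions: using that the optimal sets $T_k$ are the same for $\ell$ and $\ell+1$,
\[
r_R(\ell+1)=\min_k \hat r(T_k,\ell+1)\le \min_k\big(\hat r(T_k,\ell)+1\big)=\Big(\min_k \hat r(T_k,\ell)\Big)+1=r_R(\ell)+1 .
\]
It then remains to dispatch the degenerate cases (for instance $R$ a domain, where $\mathcal F_{t,\ell}$ is eventually empty and the statement is vacuous) and to make the base-change invariance of $r_R$ and $s_R$ precise, both of which should be routine given the Hilbert-function description above.
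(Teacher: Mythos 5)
Your analytic core coincides with the paper's own proof: the depth-lemma computation on $0\to J_T/I\to R\to S/J_T\to 0$ is exactly the long exact sequence in local cohomology used in \autoref{ThmIneqRI} (reducedness of the quotient gives $H^0_\m=0$, and $\depth(R)\geq 2$ kills $H^0_\m(R)$ and $H^1_\m(R)$), and your one-per-degree growth of $\varphi_T$ is precisely \autoref{H-increasing}. The genuine gap is in your combinatorial reduction: you implicitly assume $R$ is unmixed. You set $I=\bigcap_{j=1}^m\p_j$ over only the \emph{top-dimensional} minimal primes and restrict to proper $T\subsetneq\{1,\dots,m\}$, and you claim $s_R(\ell)=\min_j d_j$ with the optimal sets being complements of singletons. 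Mixed reduced algebras with $\depth(R)\geq 2$ do exist and are covered by the theorem: take $R=\KK[x,y,z,w]/\bigl((x)\cap(y,z)\bigr)$, the cone over a plane and a line in $\PP^3$ meeting at a point; the two cones meet along a line through the vertex, and the Mayer--Vietoris sequence $0\to R\to S/(x)\oplus S/(y,z)\to S/(x,y,z)\to 0$ gives $H^0_\m(R)=H^1_\m(R)=0$. For such $R$ your claims fail: by \autoref{ThmStabValue}(1) the stable value is $0$, not $\min_j d_j$; the optimum is achieved by families inside $N_1=\bigcap_i \q_i$ viewed in $R$, i.e.\ by the \emph{full} set $T=\{1,\dots,m\}$, which your max excludes; and your identity for $\delta_R(t,\ell)$ miscounts, since with $I$ the intersection of only the top-dimensional primes, linear independence in $J_T/I$ no longer agrees with independence in $R$ (a relation can land in $I$ without landing in the actual defining ideal), and families contained only in lower-dimensional components are not captured at all.

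The repair stays inside your framework and is what the paper does: split into cases. In the mixed case, \autoref{LemmaRImixed} gives $r_R(\ell)=\min\{t\mid \dim_\KK[N_1]_t\geq\ell\}$ with $N_1$ the intersection of the top-dimensional minimal primes taken inside $R$; your depth computation applies verbatim to $N_1$ (the quotient $R/N_1$ is again reduced, so the same sequence yields $\depth(N_1)\geq 2$, which also guarantees $\dim(N_1)\geq 2$ as that lemma requires), and the strictly-increasing Hilbert function then gives $\dim_\KK[N_1]_{t+1}\geq\ell+1$ at $t=r_R(\ell)$. In the unmixed case your reduction $r_R(\ell)=\min_k\hat r(T_k,\ell)$ over the indices $k$ with $d_k$ minimal is exactly \autoref{LemmaRIunmixed}; note that $\depth(R)\geq 2$ forces $\dim(R)\geq 2$, so that lemma applies and each $J_{T_k}/I$ has dimension $\dim(R)\geq 2$, which is what justifies your assertion that $\varphi_{T_k}(t)\to\infty$. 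With the mixed branch added, your argument becomes the paper's proof.
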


As a consequence of \autoref{MainReg}, we obtain the following bounds for the regularity index in terms of algebraic invariants.

\begin{theoremx}\label{MainRegBounds}
Let $R$ be a standard graded $\KK$-algebra.
We have the following  bounds for the regularity index 
\begin{enumerate}
\item If $R$ is a Stanley-Reisner ring corresponding to either a shellable or a Gorenstein simplicial complex,
then $
r_{R}(\ell)\leq \reg(R)+\ell-1$ {\rm (\autoref{CorBoundRegSR})}.
\item If $R$ is Gorenstein and  $F$-pure,
then $
r_{R}(\ell)\leq \reg(R)+\ell-1$ {\rm (\autoref{CorBoundRegGor})}.
\item If $R$ is an Stanley-Reisner ring and  $\Proj(R)$ is connected, then
$r_{R}(\ell)\leq \dim(R)+\ell-1$ {\rm (\autoref{ThmBoundSR})}.
\item  If $R$ is $F$-pure, $\KK$ is a separably closed field, and  $\Proj(R)$ is connected, then
$r_{R}(\ell)\leq \dim(R)+\ell-1$ {\rm (\autoref{ThmBoundFpure})}.
\end{enumerate}
\end{theoremx}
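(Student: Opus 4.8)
The plan is to establish all four bounds by a single scheme: reduce to the case $\ell = 1$ by iterating the inductive inequality of \autoref{MainReg}, and then bound the base value $r_{R}(1)$ by $\reg(R)$ in cases (1),(2) and by $\dim(R)$ in cases (3),(4). Since \autoref{MainReg} gives $r_{R}(\ell+1) \leq r_{R}(\ell) + 1$, iterating from $\ell = 1$ yields $r_{R}(\ell) \leq r_{R}(1) + (\ell - 1)$, so each stated bound is exactly $r_{R}(1) \leq \reg(R)$ (resp. $\leq \dim(R)$) plus the accumulated $\ell - 1$.

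To invoke \autoref{MainReg} I first check its hypotheses, reducedness and $\depth(R) \geq 2$, in every case. Reducedness is automatic: Stanley--Reisner rings are reduced, and $F$-pure rings are reduced. For the depth condition, a shellable or Gorenstein complex is Cohen--Macaulay and a Gorenstein ring is Cohen--Macaulay, so in (1),(2) one has $\depth(R) = \dim(R) \geq 2$ once $\dim(R) \geq 2$. In (3),(4) the Cohen--Macaulay property is unavailable, and here the connectedness of $\Proj(R)$ is used precisely to force $\depth(R) \geq 2$: for Stanley--Reisner rings this is read off Hochster's formula (the vanishing of $H^{1}_{\m}(R)$ is equivalent to connectedness of the complex once $\dim(R) \geq 2$), and for the $F$-pure case one applies a Frobenius connectedness statement, the separably closed hypothesis guaranteeing that the splitting behaves well over $\KK$. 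The finitely many low-dimensional configurations $\dim(R) \leq 1$, where $\Proj(R)$ is a finite set of points and the bounds are immediate, are dispatched directly.

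It remains to treat the base case $\ell = 1$, which is where the real content lies. Here one analyzes $\delta_{R}(t,1) = \e(R) - \max\{\e(R/(f)) : f \in S_{t},\ \bar f \neq 0,\ (I:f) \neq I\}$; the admissible forms are exactly those vanishing on some but not all minimal components, and the maximal drop is realized by a form isolating a component of least degree (its value being the stabilization value identified in \autoref{MainReg} and its sequel). Thus $r_{R}(1)$ is the least $t$ for which such a separating form of degree $t$ exists, and the task is to produce one at degree $\reg(R)$ (resp. $\dim(R)$). Writing $I = \bigcap_{i}\p_{i}$ and $J_{i} = \bigcap_{j \neq i}\p_{j}$, one needs $(J_{i})_{t} \not\subseteq (\p_{i})_{t}$ at the predicted degree; this is controlled by the regularity of $J_{i}$, giving the bound $\reg(R)$ in the Cohen--Macaulay settings (1),(2), where the shelling order, resp. the Gorenstein duality together with Fedder's criterion in the $F$-pure case, pin down the top nonvanishing degrees. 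In the connected settings (3),(4) the Cohen--Macaulay machinery is replaced by a geometric argument: connectedness of $\Proj(R)$ is leveraged to construct a separating form whose degree is controlled by $\dim(R)$ rather than $\reg(R)$, with $F$-purity and separable closedness in (4) ensuring the constructed form has coefficients in $\KK$.

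The main obstacle is exactly this base case, and within it the two $F$-pure statements (2) and (4). The combinatorial Stanley--Reisner arguments can be carried out by hand from the shelling or Gorenstein data, but the $F$-pure bounds require extracting a separating form of controlled degree from the Frobenius splitting rather than from an explicit generating set; part (4) additionally needs the separably closed hypothesis so that the components are defined over $\KK$ and the splitting yields an honest element of $S_{t}$. Pinning the bound down to exactly $\dim(R)$ in (3),(4), together with the preliminary verification of $\depth(R) \geq 2$ from connectedness alone, is the delicate point that must be settled before the induction can be applied.
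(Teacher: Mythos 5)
Your overall architecture is exactly the paper's: all four bounds are obtained by iterating the inequality $r_{R}(\ell+1)\leq r_{R}(\ell)+1$ of \autoref{ThmIneqRI} down to $\ell=1$, with the connectedness of $\Proj(R)$ in parts (3) and (4) serving only to secure the hypothesis $\depth(R)\geq 2$. Within that skeleton your treatment of the depth verification is a genuinely different (and defensible) route in the Stanley--Reisner case: you read $H^{1}_{\m}(R)=0$ off Hochster's formula from connectedness of the complex, whereas the paper (\autoref{LemmaSRConnected}) goes through $F$-depth, the vanishing criterion $H^{n-1}_{I}(\widehat{S})=0$ of \cite{LyuVan}, the Peskine--Szpiro connectedness theorem \cite{P-S}, and, in characteristic zero, reduction to prime characteristic; your combinatorial route would avoid that reduction entirely. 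For the $F$-pure case you only gesture at ``a Frobenius connectedness statement,'' which is precisely the content of \autoref{LemmaFpureConnected}, so there you are implicitly relying on the same machinery the paper cites.

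The genuine gap is the base case, and you flag it yourself: the bounds $r_{R}(1)\leq\reg(R)$ in (1),(2) and $r_{R}(1)\leq\dim(R)$ in (3),(4) are never established in your proposal, only described as ``the delicate point that must be settled before the induction can be applied.'' In the paper these are not re-proved either; they are quoted from earlier work (\cite[Theorems 5.5 and 5.7]{Ovidius}) in \autoref{CorBoundDim}, \autoref{CorBoundRegSR}, and \autoref{CorBoundRegGor}, and the theorem is then an immediate corollary of \autoref{ThmIneqRI} plus those citations. So what you identify as ``where the real content lies'' is, for this paper, an external input, and your plan to re-derive it by hand (shelling orders, Gorenstein duality, Fedder's criterion, Frobenius splittings) amounts to reproving a separate paper; moreover your sketch of it contains imprecisions that would need repair. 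In the unmixed case the maximal multiplicity drop is realized by a form lying in $\bigcap_{j\neq i}\p_{j}$ for a component $\p_{i}$ of \emph{minimal multiplicity} (\autoref{LemmaRIunmixed}), not ``a component of least degree,'' and in the mixed case by a form in the intersection of all top-dimensional components (\autoref{LemmaRImixed}); correspondingly, $r_{R}(1)$ is the least $t$ at which a separating form \emph{realizing the stabilization value} exists, not merely any separating form. As written, your attempt cannot be completed without either carrying out those base-case proofs in full or, as the paper does, citing them.
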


\begin{convention}\rm 
Throughout this manuscript we assume that $\KK$ is a field and $R$ is a standard graded $\KK$-algebra of dimension $d$. For an $R$-module $M$, $\e(M)$ denotes the Hilbert-Samuel multiplicity of $M$ as an $R$-module. For the multiplicity with respect to another $\KK$-algebra, $T$, we write $\e_T(M)$.
\end{convention}

\section{Preliminaries}\label{prelim-section}

In this section we recall some well known notions and results that are needed throughout this manuscript.

Let $R$ be a standard graded $\KK$-algebra. Given a graded $R$-module, $M$, with $\dim(M)=\theta$, 
its  {\it Hilbert function}, denoted by $h_M$, is given by
$$
h_M(t)=\dim_\KK[M]_{t}.
$$ 
By a classical theorem of Hilbert there is a unique polynomial of degree $\theta-1$ that agrees with $h_M (t)$ for $t\gg 0$. 
Then, there is a polynomial of degree $\theta$ that agrees with $\sum^t_{i=0} h_M(i)$.

The {\it Hilbert-Samuel multiplicity of\/} $M$ is the positive integer defined by,
$$
\e_R(M)=  \lim\limits_{t\rightarrow\infty}\frac{\theta!\sum^t_{i=0} h_M(i)}{t^{\theta}}.
$$
If the ring $R$ is clear from the context, we just write $\e(M)$.  

Given $t,\ell\in \mathbb{N}_+$ we define the following set, 
$$
\mathcal{F}_{t,\ell}=\{F=\{f_1,\ldots, f_\ell\}\subseteq R_t \mid f_1,\ldots, f_{\ell} \text{ are } \mathbb{K}\text{-linearly independent and } \Ann_{R}(F)\neq 0\},$$
where $\Ann_{R}(F)$ is the annihilator of $F$ as R-module.

\begin{definition}\label{Def MD Algebras}
The function $\delta:\mathbb{N}_{+}\times \mathbb{N}_+ \rightarrow \mathbb{Z}$ given by
$$
\delta_{R}(t,\ell)=\left\{\begin{array}{ll}\e(R)-\max\{\e(R/(F))\vert\,
F\in\mathcal{F}_{t,\ell}\}&\mbox{if }\mathcal{F}_{t,\ell}\neq\varnothing,\\
\e(R)&\mbox{if\ }\mathcal{F}_{t,\ell}=\varnothing,
\end{array}\right.
$$
is called the {\it generalized minimum distance function} of $R$, or simply the {\it GMD function} of $R$. 
\end{definition}

Let $R$ be a commutative Noetherian ring with identity and let $I$ be a homogeneous ideal generated by the forms $f_1,\ldots,f_{\ell}\in R$. Consider the \v{C}ech complex, \v{C}$^{\star}(\bar{f};R)$:
$$
 0\rightarrow R \rightarrow \bigoplus_{i}R_{f_i} \rightarrow \bigoplus_{i,j}R_{f_{i}f_{j}}
\rightarrow \cdots \rightarrow R_{f_1,\ldots,f_{\ell}}\rightarrow 0.
$$ 
where \v{C}$^{i}(\bar{f};R)=\bigoplus_{1\leq j_1\leq\ldots \leq j_i\leq \ell}R_{f_{j_1},\ldots,f_{j_i}}$ and the homomorphism in every summand is a localization map with appropriate sign. 

\begin{definition}
Let $M$ be a graded $R$-modue. The {\it $i$-th local cohomology of $M$ with support in $I$} is defined as 
\begin{center}
$H_{I}^{i}(M)= H^{i}($\v{C}$^{\star}(\bar{f};R)\otimes_{R} M)$.
\end{center}
\end{definition}

\begin{remark}\rm
Since $M$ is a graded $R$-module and $I$ is homogeneous, we have that the local cohomology module $H_{I}^{i}(M)$ is graded. 
\end{remark}

\begin{definition}
Let $M$ be an $R$-module with dimension $\theta$.  The {\it $a_i$-invariants}, $a_i(M)$, for $i=0,\ldots,\theta$ are defined as follows.
If $H_{\m}^{i}(M)\neq 0$, 
\begin{center}
$a_i(M)=\max\{\alpha \mid H_{\m}^{i}(M)_{\alpha}\neq 0\}$,  
\end{center} 
for $0\leq i \leq \theta$, where $H_{\m}^{i}(M)$ denotes the local cohomology module with support in the maximal ideal $\m$.  
If $H_{\m}^{i}(M)= 0$, we set $a_i(M)=-\infty.$

If $\theta=\dim(M)$, then $a_\theta(M)$, is often just called the {\it $a$-invariant} of $M$. 
\end{definition}

The $a$-invariant, is a classical invariant  \cite{GW1}, and is closely related to the Castelnuovo--Mumford regularity.

\begin{definition}
Let $M$ be a finitely generated $R$-module. The {\it Castelnuovo--Mumford regularity of $M$}, $\reg(M)$, is defined as
$$
\reg(M)=\max\{a_i(M)+i \mid 1\leq i\leq d\}.
$$ 
\end{definition}  

\begin{remark}
The Castelnuovo--Mumford regularity can also be defined in terms of the Betti numbers of $M$, taken from a minimal graded free resolution of $M$ as an 
$R$-module, that is, ${\rm reg}(M)=\max\{j-i\vert\,\beta_{ij}\neq 0\}.$
\end{remark}

\begin{definition}
Suppose that $\Char(\KK)=p> 0$. 
Let  $F:R\to R$ be the Frobenius map.
We say that $R$ is $F$-pure if for every $R$-module $M$, we have that
$$
\xymatrix{
M\otimes_R R\ar[rr]^{1_M\otimes_R F}
&&
M\otimes_R R
           }
$$
is injective.
We say that $R$ is $F$-finite if $R$ is finitely generated as $R^p$-module.
\end{definition}

\begin{definition}
Suppose that $R$ has prime characteristic $p$. 
The Frobenius  map $F:R \rightarrow R$  is defined by  $r\mapsto r^p$.
\end{definition}

\begin{remark}
 If $R$ is reduced, $R^{1/p^{e}}$ the ring of the $p^{e}$-th roots of $R$ is well defined, and $R\subseteq R^{1/p^{e}}$.
\end{remark}

A ring $R$ is called {\it unmixed} if all its associated primes have the same height, in other case $R$ is {\it mixed}.

About the asymptotic behavior of the GMD function of $R$, we have the following result. 


\begin{theorem}[{\cite[Theorem~3.9]{CSTVPV}}]
Suppose that $R$ is unmixed. Let $t\geq 1$ and $\ell\geq 1$ be integers. The following hold:
\begin{enumerate}
\item[{\rm (i)}] $\delta_{R}(t,\ell)\leq \delta_{R}(t,\ell +1)$.
\item[{\rm (ii)}] If there is $h\in R_1$ regular on $R$, then $\delta_{R}(t,\ell)\geq \delta_{R}(t+1,\ell)\geq 1$.
\end{enumerate}
\end{theorem}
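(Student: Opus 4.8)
The plan is to reduce everything to one monotonicity principle for multiplicities: if $\mathfrak{a}\subseteq\mathfrak{b}$ are homogeneous ideals with $\dim(R/\mathfrak{a})=\dim(R/\mathfrak{b})=d$, then $\e(R/\mathfrak{b})\le\e(R/\mathfrak{a})$, which follows from additivity of the degree-$d$ multiplicity on $0\to\mathfrak{b}/\mathfrak{a}\to R/\mathfrak{a}\to R/\mathfrak{b}\to 0$. Before using it I would record the dimension bookkeeping that unmixedness provides: since $R$ is unmixed, $\Ass(R)=\Min(R)$ and $\dim(R/\p)=d$ for every $\p\in\Ass(R)$. Consequently, for any $F\in\mathcal{F}_{s,\ell}$ I may choose $0\ne r\in\Ann_R(F)$, so that $(F)\subseteq(0:_R r)\subseteq\p$ for some $\p\in\Ass(R)$, whence $\dim(R/(F))=d$. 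Thus every quotient appearing in the definition of $\delta_R$ is $d$-dimensional, $\e(R/(F))$ is a genuine degree-$d$ multiplicity, and the principle above applies to all comparisons below.

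For (i) I would argue by truncation. Given $F'=\{f_1,\dots,f_{\ell+1}\}\in\mathcal{F}_{t,\ell+1}$, put $F=\{f_1,\dots,f_\ell\}$; these remain $\KK$-linearly independent and satisfy $\Ann_R(F)\supseteq\Ann_R(F')\ne 0$, so $F\in\mathcal{F}_{t,\ell}$ (in particular $\mathcal{F}_{t,\ell}\ne\varnothing$ once $\mathcal{F}_{t,\ell+1}\ne\varnothing$). Since $(F)\subseteq(F')$ with both quotients $d$-dimensional, the principle gives $\e(R/(F'))\le\e(R/(F))$, which bounds $\e(R/(F'))$ by the maximal multiplicity over $\mathcal{F}_{t,\ell}$. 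Taking the maximum over $F'$ shows the maximal multiplicity for $\ell+1$ forms is at most that for $\ell$ forms, and subtracting from $\e(R)$ yields $\delta_R(t,\ell)\le\delta_R(t,\ell+1)$; the case $\mathcal{F}_{t,\ell+1}=\varnothing$ is trivial since then $\delta_R(t,\ell+1)=\e(R)$.

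For the inequality $\delta_R(t,\ell)\ge\delta_R(t+1,\ell)$ I would use the regular linear form $h$ to transport degree $t$ to degree $t+1$. Given $F=\{f_1,\dots,f_\ell\}\in\mathcal{F}_{t,\ell}$, set $G=\{hf_1,\dots,hf_\ell\}\subseteq R_{t+1}$. As $h$ is a nonzerodivisor, $\sum c_i hf_i=0$ forces $\sum c_i f_i=0$, so the $hf_i$ stay linearly independent, and $r\,hf_i=0$ for all $i$ is equivalent to $rf_i=0$ for all $i$, giving $\Ann_R(G)=\Ann_R(F)\ne 0$; hence $G\in\mathcal{F}_{t+1,\ell}$. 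Now $(G)\subseteq(F)$ with both quotients $d$-dimensional, so $\e(R/(F))\le\e(R/(G))$; maximizing over $F\in\mathcal{F}_{t,\ell}$ shows the maximal multiplicity in degree $t$ is at most that in degree $t+1$, and therefore $\delta_R(t,\ell)\ge\delta_R(t+1,\ell)$.

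For the final bound $\delta_R(t+1,\ell)\ge 1$ I would reduce to a single form. If $\mathcal{F}_{t+1,\ell}=\varnothing$ then $\delta_R(t+1,\ell)=\e(R)\ge1$, so fix $G=\{g_1,\dots,g_\ell\}\in\mathcal{F}_{t+1,\ell}$; here $g_1\ne 0$. Since $R$ has no embedded primes, $\bigcap_{\p\in\Min(R)}\ker(R\to R_\p)=0$, so $g_1$ is nonzero in $R_\p$ for some minimal prime $\p$; via $(g_1)\cong R/(0:_R g_1)$ this says $\dim(R/(0:_R g_1))=d$, and additivity of $\e$ on $0\to(g_1)\to R\to R/(g_1)\to 0$ gives $\e(R/(g_1))=\e(R)-\e(R/(0:_R g_1))\le\e(R)-1$. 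Since $(g_1)\subseteq(G)$ with both quotients $d$-dimensional, $\e(R/(G))\le\e(R/(g_1))\le\e(R)-1$, and maximizing over $G$ gives $\delta_R(t+1,\ell)\ge1$. I expect the only real difficulty to be the non-reduced bookkeeping: one must keep straight that ``$(F)$ lies in an associated prime'' controls $\dim(R/(F))$ rather than the module $(F)$ itself, and that a nonzero form can be nilpotent yet still survive in a localization at a minimal prime. Unmixedness ($\Ass(R)=\Min(R)$ of constant dimension $d$) is precisely what makes every quotient $d$-dimensional so that the additivity inequalities are valid, while the regular linear form $h$ is needed only to manufacture a degree-$(t+1)$ member of $\mathcal{F}$ from a degree-$t$ one.
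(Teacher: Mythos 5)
Your proof is correct, but there is no in-paper proof to compare it against: the paper imports this statement verbatim from \cite[Theorem~3.9]{CSTVPV} without proving it. Judged on its own merits, your argument is sound and, importantly, works in the right generality (unmixed but possibly non-reduced). The dimension bookkeeping $(F)\subseteq(0:_R r)\subseteq\p$ for some $\p\in\Ass(R)=\Min(R)$, hence $\dim(R/(F))=d$, is exactly what legitimizes comparing and subtracting multiplicities, and your replacement of the paper's reduced additivity formula (\autoref{RemAdditivity} and \autoref{LemmaAdditivity}, which compute $\e(R/(F))$ as a sum of $\e(R/\p)$ over minimal primes containing $F$, valid only when all localizations have length one, i.e.\ in the reduced case) by additivity of the degree-$d$ multiplicity on short exact sequences is the correct tool once nilpotents are allowed. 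Your truncation argument for (i) coincides with the mechanism in the paper's own (reduced-case) proposition that $\delta_R(t,\ell)\leq\delta_R(t,\ell+1)$, and your multiplication-by-$h$ argument for the first inequality in (ii) is the same device as in \autoref{ThmStabilization}, except that there the paper must pass to $\tilde{R}=R\otimes_\KK\KK(y)$ to manufacture a linear nonzerodivisor, whereas the quoted theorem hands you $h$ by hypothesis, so no field extension is needed. Your proof of the lower bound $\delta_R(t+1,\ell)\geq 1$, via $\e(R/(g_1))\leq\e(R)-1$ using that a nonzero $g_1$ survives in $R_\p$ for some minimal $\p$ precisely because unmixedness excludes embedded primes, has no counterpart anywhere in the paper and is correct as written. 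The only cosmetic omission: in the first inequality of (ii) you tacitly assume $\mathcal{F}_{t,\ell}\neq\varnothing$; the empty case is immediate since then $\delta_R(t,\ell)=\e(R)\geq\delta_R(t+1,\ell)$.
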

\section{Asymptotic behavior of the GMD function}\label{section-mindis}

In this section we prove that the generalized minimum distance function $\delta_{R}(t,\ell)$ is a non-increasing function at the degree.
Then, the notion of stabilization value of $\delta_{R}(t,\ell)$ and its regularity index are well defined. We start this section establishing notation.

\begin{notation}
Let $X\subseteq R$ any subset and $(X)$ its generated ideal, we set
\begin{align*}
\cA(R)&=\{\p\in\Min(R)\; | \; \dim(R)=\dim(R/\p)\},\\
\cV(X)&=\{\p\in\Spec(R)\; |\; (X)\subseteq \p\}.
\end{align*}

In addition, $ \lambda_{R_\p}(M_\p)$ denotes the length of $M_\p$ as $R_\p$-module. 
\end{notation}

\begin{remark}[Additivity Formula]\label{RemAdditivity}
For any finitely generated $R$-module $M$, we have 
$$
\e(M)=\sum_{\p\in\cA(R)} \lambda_{R_\p}(M_\p)\e(R/\p).
$$
In particular, if $R$ is a reduced algebra, then
$$
\e(R)=\sum_{\p\in\cA(R)}  \e(R/\p).
$$

\end{remark}

The following lemmas are needed to prove that the GMD function stabilizes. 

\begin{lemma}\label{LemmaAdditivity} 
Let $F\in\mathcal{F}_{t,\ell}$ such that $\dim(R/(F))=\dim(R)$. Then,
\[
\e(R/(F))=\sum_{\p\in \mathcal{A}(R)\cap\mathcal{V}(F)}\e(R/\p).
\]
\end{lemma}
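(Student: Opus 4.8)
The plan is to apply the Additivity Formula of \autoref{RemAdditivity} to the finitely generated module $R/(F)$ and to check that each length occurring there equals $0$ or $1$.

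First I would record the role of the hypothesis $\dim(R/(F)) = \dim(R) = d$: it ensures that the multiplicity $\e(R/(F))$ is taken in the top dimension $d$, so that the Additivity Formula applies with the index set $\cA(R)$ and gives
\[
\e(R/(F)) = \sum_{\p \in \cA(R)} \lambda_{R_\p}\!\bigl((R/(F))_\p\bigr)\, \e(R/\p).
\]
Then I would eliminate the vanishing summands. If $\p \in \cA(R)$ with $(F) \nsubseteq \p$, then some generator $f_i$ of $(F)$ lies outside $\p$ and becomes a unit in $R_\p$, so $(R/(F))_\p = 0$ and that term drops out. Hence only the primes $\p \in \cA(R) \cap \cV(F)$ survive.

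It remains to evaluate $\lambda_{R_\p}\bigl((R/(F))_\p\bigr)$ for $\p \in \cA(R) \cap \cV(F)$. Since such a $\p$ is a minimal prime, $R_\p$ is a zero-dimensional local ring; as $R$ is reduced, $R_\p$ is a reduced Artinian local ring, hence a field, and in particular $\p R_\p = 0$. Because $(F) \subseteq \p$, the image of $(F)$ in $R_\p$ lies in $\p R_\p = 0$, so $(R/(F))_\p \cong R_\p$ has length $1$. Substituting this value into the displayed identity gives exactly $\e(R/(F)) = \sum_{\p \in \cA(R) \cap \cV(F)} \e(R/\p)$.

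The argument is short, and the two hypotheses are doing all the work. Equidimensionality $\dim(R/(F)) = \dim(R)$ is what legitimizes the Additivity Formula with index set $\cA(R)$; without it the top-dimensional support of $R/(F)$ need not meet $\cA(R)$, and the right-hand side could spuriously vanish. The reduced hypothesis is exactly what collapses each surviving length to $1$: in a non-reduced ring $R_\p$ would merely be Artinian local and $(R/(F))_\p = R_\p/(F)R_\p$ could have length larger than $1$. I do not expect any real obstacle beyond assembling these two inputs correctly.
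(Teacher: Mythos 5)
Your proposal is correct and follows essentially the same route as the paper: both apply the Additivity Formula over $\cA(R)$, discard the primes with $(F)\nsubseteq\p$, and use reducedness to identify $R_\p$ with a field so that each surviving local length is exactly $1$ (the paper phrases this as the sandwich $0\leq\lambda_{R_\p}(R_\p/(F)R_\p)\leq\lambda_{R_\p}(R_\p)=1$ with positivity forced by $(F)\subseteq\p$, while you compute $(F)R_\p\subseteq\p R_\p=0$ directly, a negligible difference). Your explicit remark that the hypothesis $\dim(R/(F))=\dim(R)$ is what legitimizes indexing over $\cA(R)$ is a point the paper leaves implicit, but it is the same argument.
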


\begin{proof}
Using \autoref{RemAdditivity} we have
\[
\e(R/(F))=\sum_{\p\in \mathcal{A}(R)}\lambda_{R_{\p}}(R_{\p}/(F)R_{\p})\e(R/\p).
\]

Fix $\p\in\mathcal{A}(R)$. Since $R$ is reduced and $R\twoheadrightarrow R_{\p}/(F)R_{\p}$, we have that
\[
1=\lambda_{R_{\p}}(R_{\p})\geq \lambda_{R_{\p}}(R_{\p}/(F)R_{\p})\geq 0.
\]
If $(F)\nsubseteq\p$, then $(F)R_{\p}=R_{\p}$, and so, $\lambda_{R_{\p}}(R_{\p}/(F)R_{\p})=0$. If $(F)\subseteq\p$, then $\lambda_{R_{\p}}(R_{\p}/(F)R_{\p})>0$, and so
 $\lambda_{R_{\p}}(R_{\p}/(F)R_{\p})=1$. Therefore,
\[
\e(R/(F))=\sum_{\p\in \mathcal{A}(R)\cap\mathcal{V}(F)}\e(R/\p).
\]
\end{proof}

\begin{lemma}\label{dis.ig} 
Let $y$ be a variable over $R$. Set $\tilde{R}= R\otimes_{\mathbb{K}}\mathbb{K}(y)$. Then, $\delta_{R}(t,\ell)=\delta_{\tilde{R}}(t,\ell)$.
\end{lemma}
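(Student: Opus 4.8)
The plan is to exploit that the scalar extension $\KK \to \KK(y)$ is faithfully flat, so that $\tilde R = R\otimes_\KK \KK(y)$ is a faithfully flat graded $R$-algebra with graded pieces $[\tilde R]_t = [R]_t \otimes_\KK \KK(y)$. Since tensoring with $\KK(y)$ over $\KK$ is exact and preserves $\KK$-dimensions, one has $h_{M\otimes_\KK\KK(y)} = h_M$ for every finitely generated graded $R$-module $M$; hence $\dim$ and $\e$ are unchanged, and in particular $\e(R) = \e(\tilde R)$. I would first record the structural facts I need: writing $\tilde R = (R[y])_{\KK[y]\setminus\{0\}}$ shows that $\tilde R$ is reduced when $R$ is, that $\p\mapsto\p\tilde R$ is a bijection $\Min(R)\to\Min(\tilde R)$ with $\p\tilde R\cap R=\p$, and that it preserves dimension and multiplicity, $\e(\tilde R/\p\tilde R)=\e(R/\p)$; thus it restricts to a multiplicity-preserving bijection $\mathcal{A}(R)\to\mathcal{A}(\tilde R)$.

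For the inequality $\delta_R(t,\ell)\ge \delta_{\tilde R}(t,\ell)$ I would show that every $F\in\mathcal{F}_{t,\ell}$ for $R$, viewed inside $\tilde R_t$, lies in $\mathcal{F}_{t,\ell}$ for $\tilde R$: $\KK$-linear independence promotes to $\KK(y)$-linear independence by flatness, and $\Ann_{\tilde R}(F) = \Ann_R(F)\otimes_\KK\KK(y)\ne 0$ since annihilators of finitely generated modules commute with flat base change. Because $\e(R/(F)) = \e(\tilde R/(F)\tilde R)$, the set of values $\{\e(R/(F))\}$ for $R$ sits inside the corresponding set for $\tilde R$, so the defining maxima satisfy $\max_R\le\max_{\tilde R}$, giving $\delta_{\tilde R}\le\delta_R$.

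The reverse inequality is the heart of the matter. Given $\tilde F=\{g_1,\dots,g_\ell\}$ realizing $\max_{\tilde R}$ — which I may take with $\dim\tilde R/(\tilde F)=\dim\tilde R$ so that \autoref{LemmaAdditivity} applies — additivity gives $\e(\tilde R/(\tilde F)) = \sum_{\p\in\Sigma}\e(R/\p)$, where $\Sigma=\{\p\in\mathcal{A}(R) : g_i\in\p\tilde R \text{ for all } i\}$ under the bijection above. Setting $J=\bigcap_{\p\in\Sigma}\p$ and using that finite intersections and degree-$t$ pieces commute with the flat base change, I obtain $[J\tilde R]_t=[J]_t\otimes_\KK\KK(y)$ with $g_1,\dots,g_\ell\in[J\tilde R]_t$; comparing dimensions yields $\dim_\KK[J]_t=\dim_{\KK(y)}[J\tilde R]_t\ge\ell$. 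I then pick $\KK$-linearly independent $f_1,\dots,f_\ell\in[J]_t$ and set $F=\{f_1,\dots,f_\ell\}$. Since $F\subseteq J\subseteq\p$ for every $\p\in\Sigma\ne\varnothing$, the ring being reduced forces $\Ann_R(F)\ne 0$ and $\dim R/(F)=\dim R$, so $F\in\mathcal{F}_{t,\ell}$; \autoref{LemmaAdditivity} then gives $\e(R/(F))=\sum_{\p\in\mathcal{A}(R),\,F\subseteq\p}\e(R/\p)\ge\sum_{\p\in\Sigma}\e(R/\p)=\e(\tilde R/(\tilde F))$. Hence $\max_R\ge\max_{\tilde R}$ and $\delta_R\le\delta_{\tilde R}$, which with the previous paragraph forces equality; the case where one, hence by the same construction both, of the families $\mathcal{F}_{t,\ell}$ is empty is immediate since both functions then equal $\e(R)=\e(\tilde R)$.

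The main obstacle I anticipate is the bookkeeping around dimension: I must justify that the maximum defining $\delta_{\tilde R}$ is attained on $\tilde F$ with $\dim\tilde R/(\tilde F)=\dim\tilde R$, and that the minimal prime of $\tilde R$ witnessing $\Ann_{\tilde R}(\tilde F)\ne 0$ may be taken top-dimensional, for otherwise both \autoref{LemmaAdditivity} and the identification of $\Sigma$ with a subset of $\mathcal{A}(R)$ break down. The remaining verifications — that $\tilde R$ is reduced, that annihilators and finite intersections commute with the base change, and that $\p\tilde R$ ranges over $\Min(\tilde R)$ with matching multiplicities — are standard consequences of faithful flatness together with $\KK(y)/\KK$ being a localized polynomial (hence separable) extension.
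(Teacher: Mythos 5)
Your proposal follows essentially the same route as the paper's proof: the easy inequality comes from viewing each $F\in\mathcal{F}_{t,\ell}$ over $\tilde R$ via flat base change, and the reverse inequality comes from taking an optimal family over $\tilde R$, locating it in an intersection of minimal primes, using $\dim_\KK[\bigcap_i\p_i]_t=\dim_{\KK(y)}[\bigcap_i\tilde\p_i]_t\geq\ell$ to descend to a $\KK$-rational family, and comparing multiplicities through \autoref{LemmaAdditivity}. The dimension bookkeeping you flag at the end (that the optimum may be taken with $\dim\tilde R/(\tilde F)=\dim\tilde R$ so that \autoref{LemmaAdditivity} applies) is exactly the step the paper's own proof leaves implicit in its final ``Therefore,'' so your write-up is, if anything, more explicit than the original about the empty case and the multiplicity comparison.
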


\begin{proof}
By construction of $\tilde{R}$ it follows that $\delta_{R}(t,\ell)\geq \delta_{\tilde{R}}(t,\ell)$.
Let $\p_{1},\ldots,\p_{a}$ be the minimal primes  of $R$. Then, $\tilde{\p}_{1},\ldots,\tilde{\p}_{a}$ are the minimal primes of $\tilde{R}$.
 Let $G\in\mathcal{F}_{t,\ell}$ such that
\[
\delta_{\tilde{R}}(t,\ell)=\e(\tilde{R})-\e(\tilde{R}/(G)).
\]
Then, there exist $\tilde{\p}_{1},\ldots,\tilde{\p}_{b}\in\Ass_{\tilde{R}}(\tilde{R})$ with $b<a$ such that
$G\subseteq\tilde{\p}_{1}\cap\cdots\cap\tilde{\p}_{b}$. Thus,
\[
\dim_{\mathbb{K}}\left[\p_{1}\cap\cdots\cap\p_{b} \right]_{t}
=\dim_{\mathbb{K}(y)}\left[\tilde{\p}_{1}\cap\cdots\cap\tilde{\p}_{b}\right]_{t}
\geq \ell.
\]
Therefore, $\delta_{R}(t,\ell)\leq \delta_{\tilde{R}}(t,\ell)$.
\end{proof}

We are ready to prove one of our main results.

\begin{theorem}\label{ThmStabilization}
If $R$ is a reduced graded algebra,  then $\delta_R (t,\ell)\geq \delta_{R}(t+1,\ell)$.
As a consequence, $\delta_R (t,\ell)$ stabilizes for $t\gg 0$.
\end{theorem}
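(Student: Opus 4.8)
The plan is to recast the inequality $\delta_R(t,\ell)\ge\delta_R(t+1,\ell)$ as a statement about degrees. Writing $m_s=\max\bigl(\{0\}\cup\{\e(R/(F))\mid F\in\mathcal{F}_{s,\ell}\}\bigr)$, the definition gives $\delta_R(s,\ell)=\e(R)-m_s$ for every $s$, so it suffices to prove $m_t\le m_{t+1}$. First I would invoke \autoref{dis.ig} to replace $R$ by $\tilde R=R\otimes_\KK\KK(y)$: this leaves every value $\delta_R(\cdot,\ell)$ unchanged but places us over the infinite field $\KK(y)$, which is exactly what is needed to produce generic linear forms; I may therefore assume $\KK$ is infinite. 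I would also dispose of $\dim R=0$ at once, since then $R_s=0$ for all $s\ge1$, both $\mathcal{F}_{t,\ell}$ and $\mathcal{F}_{t+1,\ell}$ are empty, and both sides equal $\e(R)$. So assume $\dim R\ge1$.

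Next I would split on the sign of $m_t$. If $m_t=0$ there is nothing to prove, because $m_{t+1}\ge0$ by definition. If $m_t>0$, choose $F=\{f_1,\dots,f_\ell\}\in\mathcal{F}_{t,\ell}$ with $\e(R/(F))=m_t$. By the additivity formula (\autoref{RemAdditivity}) together with reducedness—exactly the computation in the proof of \autoref{LemmaAdditivity}, where each local length $\lambda_{R_\p}(R_\p/(F)R_\p)$ is $0$ or $1$—we have $\e(R/(F))=\sum_{\p\in\mathcal{A}(R)\cap\mathcal{V}(F)}\e(R/\p)$, so $m_t>0$ forces $\mathcal{A}(R)\cap\mathcal{V}(F)\ne\varnothing$ and hence $\dim(R/(F))=\dim(R)$.

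The heart of the argument is a lifting construction. Since $R$ is reduced of positive dimension over an infinite field, no minimal prime can contain $R_1$ (such a prime would contain the homogeneous maximal ideal $\m=(R_1)$ and so equal $\m$, impossible when $\dim R\ge1$); as $R_1$ is a vector space over an infinite field and $\Min(R)=\Ass(R)$ is finite, prime avoidance yields a linear form $h\in R_1$ avoiding every associated prime, i.e.\ a nonzerodivisor. Set $G=\{hf_1,\dots,hf_\ell\}\subseteq R_{t+1}$. I claim $G\in\mathcal{F}_{t+1,\ell}$ with $\e(R/(G))=\e(R/(F))$: the elements $hf_i$ remain $\KK$-linearly independent because multiplication by the nonzerodivisor $h$ is injective, and $\Ann_R(F)\subseteq\Ann_R(G)$ since $af_i=0$ implies $a\,hf_i=0$, so $\Ann_R(G)\ne0$. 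Finally, $(G)=h\,(F)$ gives $\mathcal{V}(G)=\mathcal{V}(h)\cup\mathcal{V}(F)$, while $h$ being a nonzerodivisor means $h\notin\p$ for all $\p\in\Min(R)\supseteq\mathcal{A}(R)$; hence $\mathcal{A}(R)\cap\mathcal{V}(G)=\mathcal{A}(R)\cap\mathcal{V}(F)\ne\varnothing$, so $\dim(R/(G))=\dim(R)$ and \autoref{LemmaAdditivity} applies to both $F$ and $G$, yielding
\[
\e(R/(G))=\sum_{\p\in\mathcal{A}(R)\cap\mathcal{V}(G)}\e(R/\p)=\sum_{\p\in\mathcal{A}(R)\cap\mathcal{V}(F)}\e(R/\p)=\e(R/(F))=m_t.
\]
Thus $m_{t+1}\ge m_t$, proving $\delta_R(t,\ell)\ge\delta_R(t+1,\ell)$. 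For stabilization I would note that the same additivity formula gives $\e(R/(F))\le\sum_{\p\in\mathcal{A}(R)}\e(R/\p)=\e(R)$, so $0\le\delta_R(t,\ell)$; a non-increasing sequence of non-negative integers is eventually constant.

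The step I expect to be the main obstacle is securing the linear nonzerodivisor $h$: over a finite base field all linear forms may be zerodivisors, so the reduction to $\tilde R$ through \autoref{dis.ig} is indispensable, and one must be sure that $\tilde R=R\otimes_\KK\KK(y)$ is still reduced with minimal primes $\tilde\p$ in bijection with those of $R$ (this is the content used in the proof of \autoref{dis.ig}, valid because $\KK(y)$ is a separable, hence geometrically reduced, extension). A secondary subtlety worth making explicit is the convention that $\e(R/(F))$ is the degree in the top dimension $d=\dim R$—equivalently $\sum_{\p\in\mathcal{A}(R)\cap\mathcal{V}(F)}\e(R/\p)$, vanishing when $\dim(R/(F))<d$—which is what makes the reduction to $\mathcal{A}(R)\cap\mathcal{V}(F)$ legitimate and lets \autoref{LemmaAdditivity} be applied to the maximizer.
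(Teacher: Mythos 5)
Your proposal is correct and takes essentially the same route as the paper's proof: both pass to $\tilde{R}=R\otimes_{\KK}\KK(y)$ via \autoref{dis.ig} to secure a linear nonzerodivisor by prime avoidance, multiply the maximizing family $F$ by it to produce an element of $\mathcal{F}_{t+1,\ell}$, and compare multiplicities through \autoref{LemmaAdditivity}, concluding stabilization from a non-increasing sequence of non-negative integers. The differences are only in bookkeeping—you do the infinite-field reduction up front, prove equality $\e(R/(hF))=\e(R/(F))$ where the paper only needs an inequality, and explicitly flag the zero-dimensional case and the convention that $\e$ is taken relative to $\dim R$ (which the paper uses implicitly).
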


\begin{proof}
If $\mathcal{F}_{t,\ell}=\varnothing$, then $\delta_R (t,\ell) =\e(R)$. 
Then by definition,
$\delta_R (t+1,\ell)\leq \e(R)=\delta_{R} (t,\ell)$. 

We now assume that $\mathcal{F}_{t,\ell}\neq \varnothing$ and take $F=\{f_1,\ldots,f_\ell\}\in \mathcal{F}_{t,\ell}$ such that
$$
\delta_{R}(t,\ell)=\e(R)-\e(R/(F)).
$$
Let $\p_{1},\ldots,\p_{a}$ be the minimal primes of $R$ with $\dim(R)=\dim(R/\p_i)$ containing $F$. Then, by  \autoref{LemmaAdditivity},
$$
\e(R/(F))=\sum^a_{i=1} \e(R/\p_i).
$$
Since $F\in \p_{1}\cap\ldots\cap\p_{a}$ and the $f_1,\ldots,f_\ell$ are linearly independent in $R$, we have that
$$
\dim_{\KK}\left[\p_{1}\cap\cdots\cap\p_{a}\right]_{t}\geq \ell.
$$

Let $\tilde{R}$  as in \autoref{dis.ig}. Since $\depth(\tilde{R})=\depth(R)\geq 1$,
by prime avoidance, there exists $g\in [\tilde{R}]_{1}$ which is a nonzero divisor in $\tilde{R}$.
Then, $\tilde{R}\xhookrightarrow{g} \tilde{R}$ is injective. In particular,
$gF=\{gf_1,\ldots, gf_\ell\}\in\mathcal{F}_{t+1,\ell}$.
Therefore,
\begin{align*}
\delta_{\tilde{R}}(t+1,\ell)&\leq e(\tilde{R})-e(\tilde{R}/gF))\\
&\leq e(\tilde{R})-e(\tilde{R}/(F)\tilde{R})\\
&=\delta_{\tilde{R}}(t,\ell).
\end{align*}
Using  \autoref{dis.ig} and the previous inequality we conclude that $\delta_{R}(t+1,\ell)\leq \delta_{R}(t,\ell)$.
\end{proof}

\autoref{ThmStabilization} implies that $\delta_R (t,\ell)$ eventually stabilizes at the degree. This motivates  the following definition.

\begin{definition}
The {\it stabilization value of $R$}, denoted by $s_{R}(\ell)$, is defined by
$$
s_R(\ell)=\lim_{t\to \infty}\delta_{R}(t,\ell).
$$
The {\it regularity index of $R$}, denoted by $r_{R}(\ell)$, is defined by
$$
r_{R}(\ell) = \min\{t\in \NN \mid \delta_R (t,\ell) = s_R(\ell) \}.
$$
\end{definition}

The next lemma is useful to study the stabilization of the GMD function.

\begin{lemma}\label{H-increasing} 
Let $M$ be an $R$-module. If $\depth(M)\geq 1$, then its Hilbert function is 
non-decreasing. If  $\depth(M)\geq 2$, then its Hilbert function
strictly increasing starting at $\alpha=\min\{t\; \mid\; [M]_t\neq 0\}$.
\end{lemma}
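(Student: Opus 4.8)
The plan is to exploit the standard reduction to a one-dimensional Artinian-type situation by cutting down with a regular linear form, which is exactly what the depth hypothesis provides. First I would treat the case $\depth(M)\geq 1$. Since $R$ is standard graded, after possibly replacing $\KK$ by an infinite extension $\KK(y)$ as in \autoref{dis.ig} (this does not change the Hilbert function in each degree), I may assume $\KK$ is infinite, so that prime avoidance produces a linear form $h\in R_1$ that is a nonzerodivisor on $M$. Multiplication by $h$ gives an injective degree-one map $M\FDer{h}M$, hence an injection $[M]_t\hookrightarrow[M]_{t+1}$ for every $t$. Taking $\KK$-dimensions yields $h_M(t)\leq h_M(t+1)$, so the Hilbert function is non-decreasing.

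For the stronger statement under $\depth(M)\geq 2$, I would choose two linear forms forming a regular sequence. With $\KK$ infinite, pick $h\in R_1$ a nonzerodivisor on $M$ as above; then $\depth(M/hM)\geq 1$, and a second prime-avoidance argument gives $h'\in R_1$ that is a nonzerodivisor on $M/hM$. The key object is the quotient $M/hM$: the short exact sequence
\[
0\longrightarrow M(-1)\xrightarrow{\;h\;} M\longrightarrow M/hM\longrightarrow 0
\]
gives $h_{M/hM}(t)=h_M(t)-h_M(t-1)$. Since $h'$ is a nonzerodivisor on $M/hM$, the case already proved applies to $M/hM$, so its Hilbert function is non-decreasing; in particular, once $[M/hM]_t\neq 0$ it stays nonzero, giving $h_M(t)-h_M(t-1)=h_{M/hM}(t)>0$ for all $t\geq\alpha+1$ where $\alpha=\min\{t\mid [M]_t\neq 0\}$.

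The main point requiring care is identifying the degree at which the strict increase begins, namely verifying that $[M/hM]_t\neq 0$ for all $t>\alpha$ rather than merely for $t\gg 0$. This follows because the initial degree of $M/hM$ equals the initial degree $\alpha$ of $M$ (multiplication by the degree-one form $h$ cannot hit the bottom graded piece $[M]_\alpha\neq 0$, so $[M/hM]_\alpha\neq 0$), combined with the non-decreasing property of $h_{M/hM}$ established in the first part; together these force $h_{M/hM}(t)\geq 1$ for every $t\geq\alpha$, and hence $h_M$ is strictly increasing from $\alpha$ onward. I expect the only genuine obstacle to be this bookkeeping of the initial degree, since the existence of the regular linear forms and the exact-sequence computation are routine once $\KK$ is arranged to be infinite.
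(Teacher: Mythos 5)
Your proposal is correct and follows essentially the same route as the paper: reduce to an infinite field, cut by a linear nonzerodivisor $h$ to get $h_{M/hM}(t)=h_M(t)-h_M(t-1)$, use $\depth(M/hM)\geq 1$ to make $h_{M/hM}$ non-decreasing, and observe that $[M/hM]_\alpha\neq 0$ because multiplication by a degree-one form cannot reach the bottom graded piece. Your closing paragraph even handles the one point the paper's proof states somewhat tersely (that the strict increase begins exactly at $\alpha$, i.e.\ $h_{M/hM}(t)\geq 1$ for all $t\geq\alpha$), and it correctly supersedes the slightly weaker ``$t\geq\alpha+1$'' bound stated earlier in your sketch.
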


\begin{proof}
Since the depth and the Hilbert function are non-affected by field extensions, we can consider
$\mathbb{K}$ as an infinite field.
Let $f\in [R]_{1}$ which is a nonzero divisor on $M$. Consider the following exact sequence
\[
\xymatrix{
0\ar[r] & M(-1)\ar[r]^-{f} & M\ar[r] & \overline{M}\ar[r] & 0,
}
\]
where $\overline{M}=M/fM$. 
Since the Hilbert function is an additivity function it follows that
\[
h_{\overline{M}}(t)=h_{M}(t)-h_{M}(t-1).
\]
This implies that $\dim_\KK [M]_{t-1}\leq \dim_\KK [M]_{t},$ and prove our first claim.

We now assume that $\depth(M)\geq 2$. 
We have that $\depth(\overline{M})\geq 1$. Hence, $h_{\overline{M}}(t)$ is non-decreasing.
Let $g\in [R]_{1}$ which is a nonzero divisor on $\overline{M}$. Consider the following injective sequence
\[
\xymatrix{
0\ar[r] & \overline{M}(-1)\ar[r]^-{g} & \overline{M},
}
\]
this implies that $h_{\overline{M}}(t)\geq h_{\overline{M}}(t-1)$. Set $\alpha=\{t\mid M_t\neq 0\}$.
Since $f$ has degree $1$, the injective homomorphism $\xymatrix{M\ar[r]^-{f} & M}$ is not surjective over $M_t$.
Hence, $\overline{M}_\alpha\neq 0$. This implies that, $\dim_{\mathbb{K}}[\overline{M}]_t\geq 1$ for all $t\geq \alpha$.
Thus,
\[
h_{\overline{M}}(t)=h_{M}(t)-h_{M}(t-1)\geq 1,
\]
for all $t\geq\alpha$. Therefore, $h_{M}(t)>h_{M}(t-1)$ for all $t\geq \alpha$.
\end{proof}

The following theorem is another main result of this work, for every cases we compute the stabilization value of the GMD function.

\begin{theorem}\label{ThmStabValue}
Suppose that $R$ is a reduced graded algebra with minimal primes
$$\q_{1},\ldots, \q_{a}, \p_1,\ldots, \p_b$$ such that $\dim(R/\q_i)=\dim(R)$, 
$\dim(R/\p_i)<\dim(R)$, and $\e(R/\q_{i})\leq \e(R/\q_{j})$ for $i\leq j$.
\begin{enumerate}
\item\label{ThmStabValue-1} If $R$ is mixed and $\dim(R/\p_{j})\geq 2$ for some $j$, then $s_{R}(t,\ell)=0$.
\item\label{ThmStabValue-2} If $R$ is a domain, then $s_{R}(t,\ell)=\e(R)$.
\item\label{ThmStabValue-3} If $R$ is unmixed and $\dim(R)\geq 2$, then $s_{R}(t, \ell)=\e(R/\q_{1})$.
\item\label{ThmStabValue-4} If $R$ is one-dimensional and $\ell\leq \e(R)-\e(R/\q_1)$, then 
$$
s_{R}(t,\ell)=\min\left\{\sum_{i\in \sigma} \e(R/\q_i)\;\mid\; \sigma\subsetneq [a] \text{ and }  \ell \leq \sum_{i\in \sigma} \e(R/\q_i)\right\}.
$$
\item\label{ThmStabValue-5} If $R$ is one-dimensional and $\ell> \e(R)-\e(R/\q_1)$, then $s_{R}(t,\ell)=\e(R)$.
\item\label{ThmStabValue-6} If $R$ is mixed, $\dim(R/\p_{j})=1$ for all $j$, and  $\ell\leq \e_{T}(T)$ with 
$T= R/\bigcap^b_{i=1}\p_i$, then $s_{R}(t,\ell)=0$.
\item\label{ThmStabValue-7} If $R$ is mixed, $\dim(R/\p_{j})=1$ for all $j$,
and $\ell>\e_{T}(T)$ with $T= R/\bigcap^b_{i=1}\p_i$, then  $s_{R}(t,\ell)=\e(R/\q_1)$.
\end{enumerate}
\end{theorem}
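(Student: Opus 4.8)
The plan is to turn the computation of $s_R(\ell)$ into a combinatorial optimization over subsets of the top-dimensional minimal primes. Put $e_i=\e(R/\q_i)$, so that $\e(R)=\sum_{i=1}^{a}e_i$ by \autoref{RemAdditivity}, and for $\sigma\subseteq[a]$ write $\mathfrak{a}_\sigma=\bigcap_{i\in\sigma}\q_i$. In a reduced ring, $F\in\mathcal{F}_{t,\ell}$ exactly when the $\ell$ independent forms of $F$ lie in a common minimal prime, and by \autoref{LemmaAdditivity} any such $F$ with $\dim(R/(F))=\dim(R)$ satisfies $\e(R/(F))=\sum_{i\,:\,F\subseteq\q_i}e_i$. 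Since enlarging $\sigma$ increases $\sum_{i\in\sigma}e_i$ while shrinking $\mathfrak{a}_\sigma$, the first step is to show that for $t\gg0$
\begin{equation*}
\max_{F\in\mathcal{F}_{t,\ell}}\e(R/(F))=\max\Big\{\sum_{i\in\sigma}e_i\ \Big|\ \varnothing\neq\sigma\subseteq[a],\ \dim_{\KK}[\mathfrak{a}_\sigma]_t\geq\ell\Big\}.
\end{equation*}
The inequality $\leq$ holds because for arbitrary $F$ one takes $\tau=\{i:F\subseteq\q_i\}$ and notes that $F\subseteq\mathfrak{a}_\tau$ already carries $\ell$ independent forms; the inequality $\geq$ follows by choosing $\ell$ independent forms inside $[\mathfrak{a}_\sigma]_t$ for an achievable $\sigma$. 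Thus $s_R(\ell)=\e(R)-\lim_{t\to\infty}(\text{right-hand side})$, and the optimum sits at a maximal achievable $\sigma$ because achievability is downward closed in $\sigma$.

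The second step identifies which $\sigma$ are achievable for $t\gg0$ by analysing the module $\mathfrak{a}_\sigma$. Since $R$ is reduced, $\mathfrak{a}_\sigma$ is supported precisely on the minimal primes outside $\{\q_i\mid i\in\sigma\}$, each contributing length one after localization, so \autoref{RemAdditivity} computes both $\dim(\mathfrak{a}_\sigma)$ and $\e(\mathfrak{a}_\sigma)$. In particular $\dim(\mathfrak{a}_\sigma)=\dim(R)$ whenever $\sigma\subsetneq[a]$, whereas $\dim(\mathfrak{a}_{[a]})=\max_j\dim(R/\p_j)$, and $\e(\mathfrak{a}_{[a]})=\sum_j\e(R/\p_j)=\e_T(T)$ when every $\p_j$ is one-dimensional. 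Reading off the degree of the Hilbert polynomial then gives the decisive trichotomy: $\dim_{\KK}[\mathfrak{a}_\sigma]_t\to\infty$ if $\dim(\mathfrak{a}_\sigma)\geq2$; it stabilizes to the constant $\e(\mathfrak{a}_\sigma)$ if $\dim(\mathfrak{a}_\sigma)=1$; and it is eventually $0$ if $\dim(\mathfrak{a}_\sigma)\leq0$.

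Granting these two steps, each item is obtained by locating the largest achievable $\sigma$. For a domain $\mathcal{F}_{t,\ell}=\varnothing$, giving $s_R(\ell)=\e(R)$ in (2). If some $\p_j$ has dimension $\geq2$, or if the $\p_j$ are one-dimensional with $\ell\leq\e_T(T)$, then $\sigma=[a]$ is achievable, the maximum is $\e(R)$, and $s_R(\ell)=0$, proving (1) and (6). If $R$ is unmixed of dimension $\geq2$, or mixed with one-dimensional $\p_j$ and $\ell>\e_T(T)$, then $[a]$ is unachievable but every proper subset is, so the maximum is $\e(R)-e_1$ and $s_R(\ell)=\e(R/\q_1)$, proving (3) and (7). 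Finally, when $\dim(R)=1$ every $\mathfrak{a}_\sigma$ with $\sigma\subsetneq[a]$ is one-dimensional with $\e(\mathfrak{a}_\sigma)=\sum_{j\notin\sigma}e_j$, so $\sigma$ is achievable exactly when $\ell\leq\sum_{j\notin\sigma}e_j$; passing to complements converts the maximization into the minimum displayed in (4), the hypothesis $\ell\leq\e(R)-\e(R/\q_1)$ ensuring this minimum ranges over a nonempty family, while if $\ell>\e(R)-\e(R/\q_1)$ no single $\q_i$ carries $\ell$ independent forms, whence $\mathcal{F}_{t,\ell}=\varnothing$ for $t\gg0$ and $s_R(\ell)=\e(R)$, proving (5).

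I expect the main obstacle to be the first step, and specifically the need to rule out that a family $F$ lying only in lower-dimensional minimal primes produces a larger value of $\e(R/(F))$: this is what forces the maximum to be governed by the intersection ideals $\mathfrak{a}_\sigma$ of the top-dimensional primes and lets one invoke \autoref{LemmaAdditivity}. The remaining delicate points are the multiplicity identity $\e(\mathfrak{a}_{[a]})=\e_T(T)$ and the separate treatment of the one-dimensional case, where the relevant Hilbert functions stabilize to constants instead of growing, so that achievability becomes the genuinely combinatorial constraint $\ell\leq\sum_{j\notin\sigma}e_j$ underlying the minimum in (4).
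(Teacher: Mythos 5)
Your proposal is correct in substance and, at bottom, follows the same route as the paper: the same three ingredients (the localization computation of \autoref{LemmaAdditivity}, the additivity formula of \autoref{RemAdditivity}, and the trichotomy for Hilbert functions of intersections of minimal primes --- growth in dimension $\geq 2$, stabilization at the multiplicity in dimension $1$, vanishing otherwise, via \autoref{H-increasing}) drive every case. Indeed the paper's proof of part (4) is literally your optimization over subsets, with $J_\sigma=\bigcap_{i\notin\sigma}\q_i$ playing the role of your $\mathfrak{a}_\sigma$ up to complementation, and its other parts single out the special intersections $N=\bigcap_i\q_i$ and $\bigcap_{i\geq 2}\q_i$, which in your language are exactly the maximal achievable $\sigma$'s. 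What your write-up adds is organizational: one master formula $\delta_R(t,\ell)=\e(R)-\max\bigl\{\sum_{i\in\sigma}\e(R/\q_i)\mid \dim_\KK[\mathfrak{a}_\sigma]_t\geq\ell\bigr\}$ for $t\gg 0$, from which all seven cases are read off uniformly, where the paper argues each case ad hoc; this is a genuinely cleaner packaging, at the cost of having to police the degenerate corners of the formula (see below).

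There is one step you flag but do not close, and it deserves to be closed explicitly: the possibility that $F$ lies only in the lower-dimensional primes $\p_j$, where \autoref{LemmaAdditivity} does not apply as stated (its hypothesis is $\dim(R/(F))=\dim(R)$). The resolution is that the length computation in the proof of \autoref{LemmaAdditivity} --- each $\lambda_{R_{\q_i}}\bigl((R/(F))_{\q_i}\bigr)$ equals $1$ or $0$ according to whether $F\subseteq\q_i$ --- never uses that hypothesis; so once $\e$ is read as the degree normalized at $\dim(R)$, which is the convention \autoref{RemAdditivity} presupposes and the paper's examples employ (there $\e(R/\p_i)=0$ for the one-dimensional components of a two-dimensional ring), one gets $\e(R/(F))=\sum_{F\subseteq\q_i}\e(R/\q_i)$ for \emph{every} $F\in\mathcal{F}_{t,\ell}$, hence $\e(R/(F))=0$ when $F$ lies only in the $\p_j$'s; with that, your Step 1 is complete. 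Two smaller corners: in part (7) with $a=1$ your displayed maximum ranges over an empty family of nonempty $\sigma$'s, and one must fall back on $\e(R/(F))=0$ for all $F$ (or on $\mathcal{F}_{t,\ell}=\varnothing$), either of which gives $\delta_R(t,\ell)=\e(R)=\e(R/\q_1)$ --- this is precisely the separate $a=1$ argument the paper runs; and in part (5) the paper obtains $\mathcal{F}_{t,\ell}=\varnothing$ for \emph{all} $t$ using that the Hilbert function of $\q_i$ is non-decreasing, whereas your ``$t\gg 0$'' version is weaker but suffices for the stabilization value.
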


\begin{proof}
\hfill\medskip
\begin{itemize}
\item[(1)] Let $N=\q_{1}\cap\cdots\cap \q_{a}$. Then, $\Ass(N)=\{\p_1,\ldots,\p_b\}$. This implies that $\dim(N)\geq 2$. 
Then for some degree $\alpha$, there exists a $\mathbb{K}$-linearly independent set
$$F=\{f_{1},\ldots,f_{\ell}\}\subseteq [N]_{\alpha}$$
such that $\Ann_R (F)\neq 0$. Thus, $F\in\mathcal{F}_{t,\ell}$.
Furthermore, $\dim(R)=\dim(R/(F))$.
Hence $\e(R)=\e(R/(F))$. Therefore, $\delta_{R}(\alpha,\ell)=0$. Since $\delta_{R}(-,\ell)$ is non-increasing
it follows that $\delta_{R}(t,\ell)=0$ for all $t\geq \alpha$.

\item[(2)] Since $R$ is a domain it follows that $\mathcal{F}_{t,\ell}=\varnothing$ for all $\ell,t$. Therefore,
$\delta_{R}(t,\ell)=\e(R)$ for all $\ell,t$.

\item[(3)]
We suppose that $R$ is not a domain, because this case is covered in Part (\ref{ThmStabValue-2}). Since $R$ is unmixed, $\cA(R)=\Ass(R)$.
For all $F\in \cF_{t,\ell}$, there exists $\p\in\Ass(R)$ such that
$F\nsubseteq \p$. 
We take $F\in \cF_{t,\ell}$ that gives the generalized minimum distance function. Then,
$$
\delta_{R}(t,\ell)=\e(R)-\e(R/(F))\geq \e(R/\p)\geq \min\{\e(R/\p)\mid \p\in\Ass(R)\}.
$$
Let $N=\q_{2}\cap\cdots\cap\q_{a}$. Since $\Ass(N)=\{\q_{1}\}$ it follows that $\dim(N)\geq 2$.
Then for some degree $\alpha$, there exists a $\mathbb{K}$-linearly independent set,
$F=\{f_{1},\ldots,f_{\ell}\}\subseteq [N]_{\alpha}$, such that
$\Ann_R (F)\neq 0$. Hence, $F\in\cF_{\alpha,\ell}$. Thus,
\[
\delta_{R}(\alpha,\ell)\leq \e(R)-\e(R/(F))=\e(R/\q_{1}).
\]
Since $\delta_{R}(-,\ell)$ is non-increasing it follows that $\delta_{R}(t,\ell)=e(R/\q_{1})$ for all $t\geq \alpha$.

\item[(4)]
We suppose that $R$ is not a domain because this case is covered in Part (\ref{ThmStabValue-2}).
Observe that $R$ is unmixed.
Let $\sigma\subsetneq [a]$ non-empty. We set $J_{\sigma}=\bigcap_{i\notin\sigma}\q_{i}$. 
Then, $\Ass(J_{\sigma})=\{\q_{i}\mid i\in\sigma\}$, as an $R$-module. Hence $\dim(J_{\sigma})=1$. From the short exact sequence
\begin{equation*}
\xymatrix{
0\ar[r] & J_{\sigma}\ar[r] & R\ar[r] & R/J_\sigma\ar[r] & 0,
}
\end{equation*}
 we have that $\e(J_{\sigma})=\sum_{i\in\sigma} \e(R/\q_i)$.
We note that for every $\sigma$, there exists $\alpha_\sigma\in\NN$ such that $\dim_\KK[J_\sigma]_t=\e(J_{\sigma})$ for $t\geq \alpha_\sigma$.
Let $\alpha=\max\{\alpha_{\sigma}\}$.
If $\ell\leq \e(J_{\sigma})$ and $t\geq \alpha$, there exists a $\KK$-linearly
independent set $F=\{f_1,\ldots,f_\ell\}\subseteq [J_\sigma]_t$. Then,
$$
\e(R)-\e(R/(F)) = \sum_{F\not\in \q_i} \e(R/\q_i)\geq \sum_{J_\sigma\not\subseteq \q_i} \e(R/\q_i)=  \e(J_\sigma).
$$
Hence, $$\delta_R(t,\ell)\geq \min\left\{\e(J_\sigma)\; \mid \; \e(J_\sigma)\geq \ell\right\}.$$

We fix $t\geq \alpha$, and set $\gamma\subsetneq [a]$ such that 
$$\e(J_\gamma)=\min\left\{\e(J_\sigma)\; \mid \; \e(J_\sigma)\geq \ell\right\}$$
and $G=\{g_1,\ldots,g_\ell\}\subseteq [J_\gamma]_t$ is a $\KK$-linearly independent set.
We note that $(G)\not\subseteq J_\gamma \cap \q_i$ for any $i\in \gamma$; otherwise,
$$\dim_\KK [(G)]_t\leq \dim_\KK [J_\gamma \cap \q_i]_t=\e(J_\gamma\cap\q_i)=\e(J_\gamma)-\e(R/\q_i)<\ell,$$ which is not possible by our choice of $\gamma$.
Hence,
$$
\delta_R(t,\ell)\leq \e(R)-\e(R/(F)) = \e(R)-\e(R/J_\gamma)= \min\left\{\e(J_\sigma)\;\mid\; \e(J_\sigma)\geq \ell\right\}
$$
by \autoref{LemmaAdditivity}.
We conclude that  
$$
\delta_R(t,\ell)=\min\left\{\e(J_\sigma)\;\mid\; \e(J_\sigma)\geq \ell\right\}.
$$

\item[(5)] 
Note that in this case $R$ is unmixed.
We show by contradiction that $\cF_{t,\ell}=\varnothing$.
Suppose that there exists a $\KK$-linearly independent set, 
$G=\{g_1,\ldots, g_\ell\}\subseteq [R]_{t\geq 1}$,
such that $\Ann_R (G)\neq 0$.
Then, there exists $i$ such that  $G\subseteq \q_{i}$ by prime avoidance.
Thus, $\Ass(\q_i)=\{\q_{j}\mid j\neq i\}$. Hence $\dim_R(\q_i)=1$, where we consider $\q_i$ as an $R$-module. From the short exact sequence
\begin{equation*}
\xymatrix{
0\ar[r] & \q_i\ar[r] & R\ar[r] & R/\q_i\ar[r] & 0,
}
\end{equation*}
 we have that $\e(\q_i)=\e(R)-\e(R/\q_i)\leq \e(R)-\e(R/\q_1)$.
There exists $\alpha\in\NN$ such that $\dim_\KK[\q_i]_t=\e(R)-\e(R/\q_i)\leq \e(R)-\e(R/\q_1)$ for $t\geq \alpha$.
Since $\depth(\q_i)>0$, we have that the Hilbert function of $\q_i$
is non-decreasing. Then, $\dim_\KK [\q_i]_t\leq \e(R)-\e(R/\q_1)$ for all $t$, this contradicts the fact that 
$G\subseteq R$ is a  $\KK$-linearly independent set, as $\ell>\e(R)-\e(R/\q_1)$.

\item[(6)] Let $N=\q_{1}\cap\cdots\cap \q_{a}$. Then, $\Ass(N)=\{\p_{1},\ldots,\p_{b}\}$.
This implies that $\dim(N)=1$. Hence, there exists $\alpha\in\mathbb{N}$ such that 
$\dim_{\mathbb{K}}[N]_{t}=\e_T(T)$
for all $t\geq \alpha$. Thus, there exists a $\mathbb{K}$-linearly independent set 
$F=\{f_{1},\ldots,f_{\ell}\}\subseteq [N]_{\alpha}$. Furthermore, since $\Ann_R (F)\neq 0$ we have that $F\in\cF_{\alpha,\ell}$. Then,
\[
\delta_{R}(\alpha,\ell)\leq \e(R)-\e(R/(F))=0.
\]
Therefore, $\delta_{R}(t,\ell)=0$ for all $t\geq \alpha$.

\item[(7)] 
Let $N=\q_{1}\cap\cdots\cap \q_{a}$.
As in the previous part, 
$\dim(N)=1$ and $\e_T(N)=\e_T(T)$.
Then, it does not exist a $\KK$-linearly independent homogeneous set $F=\{ f_1,\ldots,f_\ell \}$ in $N$.
Thus for every set $G\in\mathcal{F}_{t,\ell}$, $G\nsubseteq N$ and hence
\[
s_{R}(\ell)\geq \e(R/\q_{1}).
\]

We now proceed by cases. We first assume that $a>1$.
Let $V=\q_{2}\cap\cdots\cap \q_{a}$. Then, $\Ass(V)=\{\q_{1},\p_{1},\ldots,\p_{b}\}$.
Thus, $\dim(V)\geq 2$. This implies that for some degree $\alpha_{1}$, there exists a $\mathbb{K}$-linearly independent set
$F=\{f_{1},\ldots,f_{\ell}\}\subseteq [V]_{\alpha_{1}}$
such that $\Ann_R (F)\neq 0$. Hence $F\in\cF_{\alpha_{1},\ell}$.
Furthermore, observe that $\dim(R)=\dim(R/(F))$. Hence,
\[
\delta_R (\alpha_{1},\ell)\leq \e(R)-\e(R/(F))=\e(R/\q_{1}).
\]
Therefore, $s_{R}(\ell)=\e(R/\q_{1})$.

We now assume that $a=1$.
Let $F=\{f_1,\ldots, f_\ell\}\in\mathcal{F}_{\ell,t}$ for some $t\geq 1$. 
Let $\sigma\subsetneq [b]$ non-empty. We set $J_{\sigma}=\bigcap_{i\notin\sigma}\p_{i}$. 
Then, $\Ass(J_{\sigma})=\{\p_{i}\mid i\in\sigma\}\cup \{\q_1\}$, as an $R$-module. 
If $F\subseteq J_{\sigma}$, then $\e(R/(F))=\e(R/\q_1)$ by \autoref{LemmaAdditivity}.
We have that $J_{\sigma}\cap \q_1=J_\sigma \bigcap N$, and $F\nsubseteq J_{\sigma}\cap \q_1$ 
because $\dim_\KK [J_{\sigma}\cap \q_1]_t\leq \dim_\KK [N]_t<\ell$. Hence, $s_R(\ell)=\e(R/\q_1)$.
\end{itemize}
\end{proof}

\begin{example}
Consider the ring $S=\mathbb{F}_{2}[x,y,z]$ and the ideal $I=(x^3+y^2z,xy+z^{2})$. 
Set $R=S/I$. Using {\it Macaulay2} \cite{M2}, the minimal primes of $R$ are
\[
\p_{1}=(x,z), \quad \p_{2}=(y+z,x+z), \text{ and }\; \p_{3}=(xy+z^{2},x^{2}+y^{2}+xz+yz+z^{2}).
\]
Thus, we have that $\dim(R/\p_{i})=1$ for all $i$, $\e(R)=6$, $\e(R/\p_{1})=\e(R/\p_{2})=1$ 
and $\e(R/\p_{3})=4$. Let $\mu\geq0$. For $\ell=1$, we can take $f=y^{\mu}(y^{3}+x^{2}z)\in \mathcal{F}_{\mu+3,1}$. 
Since $f\in\p_{2}\cap\p_{3}$, it follows that
\[
s_{R}(1)=1.
\]
For $\ell=5$, we have that
\[
s_{R}(5)=5,
\]
which is obtained via $F=\{x^{2+\mu}z,x^{\mu+1}z^{2},x^{\mu}y^{2}z,x^{\mu}yz^{2},x^{\mu}z^{3}\}\in\mathcal{F}_{\mu+3,5}\subseteq\p_{1}$.
This proof \autoref{ThmStabValue} (\ref{ThmStabValue-4}).

Since the Hilbert series of $R$ is
\[
1+3t+5t^{2}+\sum_{i=3}^{\infty}6t^{i}.
\]
For $\ell>7$, we have that $\mathcal{F}_{t,r}=\varnothing$ for all $t$. For any linearly independent set of homogeneous elements 
of degree $t$, $F=\{f_{1},\ldots,f_{6}\}$, we have that there exist 
$a_{i}\in\mathbb{F}_{2}$ such that $y^{3+(t-3)}=\sum_{i} a_{i}f_{i}$. Since $y^{3+(t-3)}\notin\p_{i}$ for all $i$, 
it follows that $\mathcal{F}_{t,6}=\varnothing$. Therefore,
\[
s_{R}(\ell)=\e(R)=6 \text{ for }\ell>5.
\]
This proof \autoref{ThmStabValue} (\ref{ThmStabValue-5}).
\end{example}

\begin{example}
Consider the ring $S=\mathbb{F}_{3}[x,y,z]$ and the ideal $I=(y^{2}-yz,x^{2}y-yz^{2})$.
Set $R=S/I$. Using {\it Macaulay2} \cite{M2}, the minimal primes of $R$ are
\[
\q_{1}=(y), \quad \p_{2}=(y-z,x-z), \text{ and }\; \p_{2}=(y-z,x+z),
\]
and we have that $\dim(R/\q_{1})=2$, $\dim(R/\p_{1})=\dim(R/\p_{2})=1$, $\e(R)=\e(R/\q_{1})=1$ and $\e(R/\p_{1})=\e(R/\p_{2})=0$. For $\ell\in\{1,2\}$ there exists $F\in\mathcal{F}_{t,\ell}$ such that $F\subseteq\q_{1}$. Therefore,
\[
s_{R}(\ell)=0.
\]
This \autoref{ThmStabValue} (\ref{ThmStabValue-6}).

For $\ell>2$, we can find a set $F=\{f_{1},\ldots,f_{\ell}\}\subseteq [R]_{t}$ of linearly independent elements for $t\gg 0$.
However, using the Hilbert series of $\q_{1}$ we have that $F\nsubseteq\q_{1}$. This implies that $\e(R/(F))=0$. Therefore,
\[
s_{R}(\ell)=e(R)=1.
\]
This proof \autoref{ThmStabValue} (\ref{ThmStabValue-7}).
\end{example}

The next proposition shows that the GMD function is non-decreasing when $\ell$ grows. 

\begin{proposition}
Suppose that $R$ is a reduced graded algebra. Then, $\delta_{R}(t,\ell)\leq \delta_{R}(t,\ell+1)$.
\end{proposition}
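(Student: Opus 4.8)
The plan is to reduce the inequality to a comparison of the maximal values that define $\delta_R$, and then to confront a maximizer for $\ell+1$ with a competitor for $\ell$ obtained by deleting one form. First I would dispose of the degenerate case: if $\mathcal{F}_{t,\ell+1}=\varnothing$, then $\delta_R(t,\ell+1)=\e(R)$, and since $\e(R/(F))\geq 0$ for every $F$ one always has $\delta_R(t,\ell)\leq \e(R)$, so the inequality is immediate. Hence I may assume $\mathcal{F}_{t,\ell+1}\neq\varnothing$ and choose $G=\{g_1,\ldots,g_{\ell+1}\}\in\mathcal{F}_{t,\ell+1}$ realizing the maximum, i.e. $\delta_R(t,\ell+1)=\e(R)-\e(R/(G))$.

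Next I would produce the competitor $F=\{g_1,\ldots,g_\ell\}$ by dropping the last form, and check two things: that $F\in\mathcal{F}_{t,\ell}$, and that $\e(R/(F))\geq \e(R/(G))$. The membership is routine: $g_1,\ldots,g_\ell$ remain $\KK$-linearly independent, and since annihilating all of $g_1,\ldots,g_{\ell+1}$ is a stronger condition than annihilating only $g_1,\ldots,g_\ell$, we get $\Ann_R(F)\supseteq\Ann_R(G)\neq 0$; in particular $\mathcal{F}_{t,\ell}\neq\varnothing$. Granting the multiplicity inequality, the proof then closes along the chain
$$
\delta_R(t,\ell)\leq \e(R)-\e(R/(F))\leq \e(R)-\e(R/(G))=\delta_R(t,\ell+1),
$$
where the first step holds because $F$ is merely one competitor in the maximum defining $\delta_R(t,\ell)$.

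The main obstacle is exactly the multiplicity comparison $\e(R/(F))\geq \e(R/(G))$. We do have a surjection $R/(F)\twoheadrightarrow R/(G)$ coming from $(F)\subseteq(G)$, but one cannot simply invoke ``multiplicity decreases under surjections,'' because deleting a form may raise the dimension of the quotient, and multiplicity taken with respect to a module's own dimension is not monotone across a dimension jump. The way I would get around this is to normalize everything to $\dim R$ through the additivity formula of \autoref{RemAdditivity}: writing
$$
\e(R/(F))=\sum_{\p\in\mathcal{A}(R)}\lambda_{R_\p}\big((R/(F))_\p\big)\,\e(R/\p),
$$
and similarly for $G$, it suffices to compare the two sums term by term. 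Localizing the surjection $R/(F)\twoheadrightarrow R/(G)$ at each $\p\in\mathcal{A}(R)$ yields a surjection of finite-length $R_\p$-modules, whence $\lambda_{R_\p}((R/(F))_\p)\geq \lambda_{R_\p}((R/(G))_\p)$; summing over $\p\in\mathcal{A}(R)$ gives the desired inequality.

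I expect the localized-length step to be the delicate point to state carefully, since it is where the dimension subtlety is absorbed. As an alternative one could first argue that a maximizer $G$ may be taken with $\dim R/(G)=\dim R$ and then apply \autoref{LemmaAdditivity} directly, but the localized-length argument is cleaner because it avoids any case distinction on the dimension of the quotients and, incidentally, does not even use reducedness beyond what is already built into $\mathcal{F}_{t,\ell}$.
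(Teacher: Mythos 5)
Your proposal is correct and takes essentially the same route as the paper: handle the case $\mathcal{F}_{t,\ell+1}=\varnothing$ directly, truncate a maximizer $G\in\mathcal{F}_{t,\ell+1}$ to its first $\ell$ elements, use $0\neq\Ann_R(G)\subseteq\Ann_R(F)$ to get $F\in\mathcal{F}_{t,\ell}$, and conclude via the comparison $\e(R/(F))\geq \e(R/(G))$. The only difference is that the paper simply asserts this multiplicity inequality from the containment of ideals, whereas you justify it by localizing the surjection $R/(F)\twoheadrightarrow R/(G)$ at the primes of $\mathcal{A}(R)$ and summing with the additivity formula of \autoref{RemAdditivity} (where, as in the proof of \autoref{LemmaAdditivity}, the relevant lengths are $0$ or $1$), so your write-up is, if anything, more complete on the one delicate step.
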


\begin{proof}
By definition we have that $\delta_{R}(t,\ell)\leq \e(R)$. Suppose that $\cF_{t,\ell+1}=\varnothing$. Then, $\delta_{R}(t,\ell+1)=\e(R)$, and so,
$\delta_{R}(t,\ell)\leq \delta_{R}(t,\ell+1)$. 

Now, suppose $\cF_{t,\ell +1}\neq\varnothing$. We take $F=\{f_1,\ldots,f_{\ell+1}\}$ such that $\delta_{R}(t,\ell+1)=\e(R)-\e(R/(F))$.
Let $F'=\{f_1,\ldots,f_\ell\}$ be a $\mathbb{K}$-linearly independent set. 
We have $0\neq \Ann_R (F)\subseteq \Ann_R (F')$. This implies that $F'\in \cF_{t,\ell}$. Since $\Ann_R(F')\subseteq \Ann_R (F)$, we have that $\e(R/(F))\leq \e(R/(F'))$.
Therefore,
$$
\delta_{R}(t,\ell+1)=\e(R)-\e(R/(F))\geq \e(R)-\e(R/(F'))\geq \delta_{R}(t,\ell).
$$
\end{proof}

We now start with preparation lemmas to study the growth of the regularity index of $\delta_R (t,\ell)$.

\begin{lemma}\label{LemmaRImixed}
Suppose that $R$ is a reduced graded mixed algebra with minimal primes
$$\q_{1},\ldots, \q_{a}, \p_1,\ldots, \p_b$$ such that $\dim(R/\q_i)=\dim(R)$ and  
$\dim(R/\p_i)<\dim(R)$. 
Let $N_{1}=(\cap_{i=1}^{a}\q_{i})$ and $N_{2}=(\cap_{i=1}^{b}\p_{i})$.
If $\dim(N_1)\geq 2$, then 
$r_{R}(\ell)=\min\{t\; | \;\dim_{\mathbb{K}}[N_{1}]_{t}\geq \ell\}$.
\end{lemma}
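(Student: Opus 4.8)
The plan is to show the two inequalities $r_R(\ell)\le\tau$ and $r_R(\ell)\ge\tau$ separately, where I abbreviate $\tau=\min\{t\mid \dim_{\KK}[N_1]_t\ge\ell\}$. Before doing so I would record the structure of $N_1$. Localizing $N_1=\q_1\cap\cdots\cap\q_a$ at each minimal prime as in \autoref{RemAdditivity} shows $(N_1)_{\q_i}=0$ (since $R_{\q_i}$ is a field, $R$ being reduced) while $(N_1)_{\p_j}=R_{\p_j}$, so $\Ass(N_1)=\{\p_1,\ldots,\p_b\}$ and $\dim(N_1)=\max_j\dim(R/\p_j)$. Thus the hypothesis $\dim(N_1)\ge 2$ is exactly the hypothesis of \autoref{ThmStabValue}\,(\ref{ThmStabValue-1}), giving $s_R(\ell)=0$; and since $\dim(N_1)\ge 1$ the Hilbert function $t\mapsto\dim_{\KK}[N_1]_t$ is eventually a polynomial of positive degree, so $\tau$ is finite. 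I also note that $\cA(R)=\{\q_1,\ldots,\q_a\}$ and $\e(R)=\sum_{i=1}^a\e(R/\q_i)$ by \autoref{RemAdditivity}, and that $N_1N_2\subseteq N_1\cap N_2=0$ with $N_2\neq 0$ (otherwise $\bigcap_j\p_j=0$ would force the $\q_i$ to be redundant among the minimal primes).

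For $r_R(\ell)\le\tau$, at degree $\tau$ I would choose a $\KK$-linearly independent set $F=\{f_1,\ldots,f_\ell\}\subseteq[N_1]_\tau$. Then $(F)\subseteq N_1\subseteq\q_i$ for every $i$, and $N_2F\subseteq N_2N_1=0$ shows $0\neq N_2\subseteq\Ann_R(F)$, so $F\in\cF_{\tau,\ell}$ and $\dim(R/(F))=\dim(R)$. Now \autoref{LemmaAdditivity} yields $\e(R/(F))=\sum_{i=1}^a\e(R/\q_i)=\e(R)$, whence $\delta_R(\tau,\ell)=0=s_R(\ell)$ and therefore $r_R(\ell)\le\tau$.

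For the reverse bound I would fix $t<\tau$, so that $\dim_{\KK}[N_1]_t<\ell$, and show $\delta_R(t,\ell)>0$ by proving $\e(R/(F))<\e(R)$ for every $F\in\cF_{t,\ell}$. If $\dim(R/(F))=\dim(R)$, then \autoref{LemmaAdditivity} gives $\e(R/(F))=\sum_{\q_i\supseteq(F)}\e(R/\q_i)$; were this equal to $\e(R)=\sum_i\e(R/\q_i)$, then, as each $\e(R/\q_i)>0$, we would need $(F)\subseteq\q_i$ for all $i$, i.e. $f_1,\ldots,f_\ell\in[N_1]_t$, forcing $\dim_{\KK}[N_1]_t\ge\ell$ and contradicting $t<\tau$; hence $\e(R/(F))<\e(R)$. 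If instead $\dim(R/(F))<\dim(R)$, then $R/(F)$ has strictly smaller dimension and so contributes nothing to the degree in dimension $\dim(R)$ used by the GMD function, so again $\e(R/(F))<\e(R)$. Consequently $\max_F\e(R/(F))<\e(R)$ and $\delta_R(t,\ell)>0=s_R(\ell)$ for all $t<\tau$, giving $r_R(\ell)\ge\tau$; together with the first bound this yields $r_R(\ell)=\tau$.

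The hard part will be the second inequality, and within it the honest treatment of the case $\dim(R/(F))<\dim(R)$: one must be certain that a dimension-dropping $F$ can never realize the value $\e(R)$ under the multiplicity convention in force, so that the only route to $\delta_R(t,\ell)=0$ is a dimension-preserving $F$ with $(F)\subseteq N_1$. The other point that must be nailed down is the identification $\Ass(N_1)=\{\p_1,\ldots,\p_b\}$, since it is precisely what places us in \autoref{ThmStabValue}\,(\ref{ThmStabValue-1}) and secures $s_R(\ell)=0$, against which the whole computation of $r_R(\ell)$ is measured.
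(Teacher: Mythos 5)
Your proposal follows the same route as the paper's proof: the same threshold $\tau=\min\{t\mid\dim_{\KK}[N_1]_t\geq\ell\}$, the upper bound $r_R(\ell)\leq\tau$ obtained from $\delta_R(\tau,\ell)=0$ (the paper simply cites \autoref{ThmStabValue}\,(1) here, while you re-derive it concretely and correctly via $N_2N_1=0$, $N_2\neq 0$, and \autoref{LemmaAdditivity}), and the lower bound obtained from the observation that any $F\in\cF_{t,\ell}$ with $t<\tau$ cannot lie inside $N_1$, hence escapes some $\q_i$ and leaves $\delta_R(t,\ell)>0$. Your localization computation of $\Ass(N_1)=\{\p_1,\ldots,\p_b\}$ is also exactly the identification the paper makes (inside the proof of \autoref{ThmStabValue}\,(1)) to place the hypothesis $\dim(N_1)\geq 2$ in scope. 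One harmless omission: when $\cF_{t,\ell}=\varnothing$ for some $t<\tau$, your maximum is over an empty set; the paper treats this sub-case explicitly, noting $\delta_R(t,\ell)=\e(R)>0$ by definition, which closes it immediately.

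The one place you genuinely depart from the paper is the case $\dim(R/(F))<\dim(R)$ in the lower bound, which the paper's proof passes over in silence: its inequality $\delta_R(t,\ell)\geq\e(R/\q_i)$ tacitly assumes the maximizing $F$ preserves dimension, since that is what \autoref{LemmaAdditivity} requires. You were right to single this out as the delicate step, but your justification --- that a lower-dimensional quotient ``contributes nothing to the degree in dimension $\dim(R)$'' --- is only valid if $\e(R/(F))$ is read as the multiplicity normalized in dimension $d=\dim(R)$, i.e.\ as $\sum_{\p\in\cA(R)}\lambda_{R_\p}\left((R/(F))_\p\right)\e(R/\p)$ in the sense of \autoref{RemAdditivity}, which indeed vanishes when the dimension drops. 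Under the paper's literal definition, where $\e(M)$ is normalized by $\theta=\dim(M)$, your claimed inequality $\e(R/(F))<\e(R)$ can fail: for the reduced mixed ring $R=\KK[x,y,z,u]/(ux,uy)$ one has $\e(R)=1$, while $F=\{x^2\}$ satisfies $u\in\Ann_R(F)$ and $\e(R/(F))=3$ relative to $\dim R/(F)=2$. With the literal convention the lemma itself (and \autoref{ThmStabValue}\,(1), and the nonnegativity of $\delta_R$) would break, so the top-dimensional reading is clearly the intended one; under it your case analysis is complete and in fact more careful than the paper's, and it buys a further benefit: since $R$ is reduced, $\lambda_{R_{\q_i}}\left((R/(F))_{\q_i}\right)\leq 1$ gives $\e(R/(F))\leq\e(R)$ for \emph{every} $F$, which upgrades your upper-bound conclusion from $\delta_R(\tau,\ell)\leq 0$ (all your argument literally shows) to the needed equality $\delta_R(\tau,\ell)=0=s_R(\ell)$. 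You should state that convention explicitly; with it in place, the proof is correct.
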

\begin{proof}
Set $\alpha=\min\{t\; | \;\dim_{\mathbb{K}}[N_{1}]_{t}\geq \ell\}$.
By  \autoref{ThmStabValue} (1),
we have $\delta_{R}(t,\ell)=0$ for all $t\geq\alpha$. This implies that $r_{R}(\ell)\leq\alpha$.

Suppose there exists $F\in\cF_{t,\ell}$ with $t<\alpha$. Then $F\nsubseteq N_{1}$. This implies
that there exists a prime ideal $\q_i$ such that $F\nsubseteq\q_i$. Hence, $\delta_{R}(t,\ell)\geq e(R/\q_i)>0$.
Thus, $r_{R}(\ell)\geq\alpha$. However, if such $F$ does not exist, then by \autoref{H-increasing} we have that
$\dim_{\mathbb{K}}[N_1]_{t}<\ell$ for all $t<\alpha$. Hence, $\cF_{t,\ell}=\varnothing$ for all $t<\alpha$,
and so, $\delta_{R}(t,\ell)=\e(R)>0$. Since the function is non-increasing, it follows that
$\delta_{R}(t,\ell)\leq \e(R)$ for all $t\geq \alpha$. Thus, $r_{R}(\ell)\geq\alpha$.
Therefore, $r_{R}(\ell)=\alpha$.
\end{proof}

\begin{lemma}\label{LemmaRIunmixed}
Suppose that $R$ is an unmixed standard graded algebra such that $d\geq 2$.
Let $\p_1,\ldots,\p_a$ be the minimal primes of $R$. Let $\cA=\{i\; \mid\; \e(R/\p_i)\leq \e(R/\p_j) \hbox{ for every }j\}.$
Then, 
$$
r_{R}(\ell)=\min\{t\; | \;\dim_{\mathbb{K}}[\cap_{j\neq i} \p_j]_{t}\geq \ell \hbox{ for some }i\in \cA\}.
$$
\end{lemma}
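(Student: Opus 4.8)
My plan is to reduce the whole statement to a single combinatorial description of $\delta_R(t,\ell)$ in terms of the minimal primes, from which both $s_R(\ell)$ and the smallest degree attaining it can be read off directly. Throughout I use that $R$ is reduced (as in the rest of this section) and I assume $R$ is not a domain, since the domain case has $\cF_{t,\ell}=\varnothing$; thus $a\geq 2$ and, by unmixedness, $\cA(R)=\Min(R)=\Ass(R)=\{\p_1,\dots,\p_a\}$ with $\dim(R/\p_j)=d$ for all $j$. For each $i$ I set $N_i=\bigcap_{j\neq i}\p_j$. The first step is the structural computation $\Ass_R(N_i)=\{\p_i\}$: localizing at a minimal prime $\p_j$ (a field, since $R$ is reduced) shows $(N_i)_{\p_j}=0$ for $j\neq i$ while $(N_i)_{\p_i}=R_{\p_i}\neq 0$, so $N_i\neq 0$, $\dim N_i=\dim(R/\p_i)=d\geq 2$, and since $\m\notin\Ass(N_i)$ we have $\depth N_i\geq 1$; by \autoref{H-increasing} the Hilbert function $t\mapsto\dim_\KK[N_i]_t$ is then non-decreasing.

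The key reduction I would establish next is that, for reduced unmixed $R$, a $\KK$-linearly independent set $F$ of forms lies in $\cF_{t,\ell}$ \emph{if and only if} $(F)$ is contained in some minimal prime, equivalently $\dim(R/(F))=d$. Indeed, writing $0=\bigcap_j\p_j$, an element annihilates $(F)$ exactly when it lies in $\bigcap\{\p_j:(F)\not\subseteq\p_j\}$, and this intersection is nonzero precisely when $(F)\subseteq\p_j$ for at least one $j$. Hence \autoref{LemmaAdditivity} applies to \emph{every} $F\in\cF_{t,\ell}$, giving
\[
\e(R/(F))=\sum_{j:(F)\subseteq\p_j}\e(R/\p_j),
\qquad\text{whence}\qquad
\delta_R(t,\ell)=\min_{F\in\cF_{t,\ell}}\ \sum_{j\in T(F)}\e(R/\p_j),
\]
where $T(F)=\{j:(F)\not\subseteq\p_j\}$ is nonempty because $F\neq 0$.

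I would then set $m=\min_j\e(R/\p_j)=s_R(\ell)$, the last equality by \autoref{ThmStabValue}\,(\ref{ThmStabValue-3}). Since each summand is $\geq 1$ and $T(F)\neq\varnothing$, every admissible sum is $\geq m$, with equality iff $T(F)=\{i\}$ for some $i\in\cA$; that is, iff $(F)\subseteq N_i$ for some $i\in\cA$ (then automatically $(F)\not\subseteq\p_i$, as otherwise $F\subseteq\bigcap_j\p_j=0$). Such an $F\in\cF_{t,\ell}$ exists iff $N_i$ contains $\ell$ linearly independent forms of degree $t$ for some $i\in\cA$, i.e. iff $\dim_\KK[N_i]_t\geq\ell$ for some $i\in\cA$. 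This yields, at every degree $t$, the equivalence $\delta_R(t,\ell)=m\iff\exists\,i\in\cA:\dim_\KK[N_i]_t\geq\ell$ (when $\cF_{t,\ell}=\varnothing$ one has $\delta_R(t,\ell)=\e(R)>m$, using $a\geq 2$). Taking the minimum over $t$ on both sides gives exactly the stated formula for $r_R(\ell)$.

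The hard part will be the middle step, namely nailing down $\delta_R(t,\ell)=\min_F\sum_{j\in T(F)}\e(R/\p_j)$. This rests on two facts: that membership in $\cF_{t,\ell}$ is equivalent to $\dim(R/(F))=d$ in the reduced unmixed setting, so that \autoref{LemmaAdditivity} is available for every competitor $F$; and the structural identity $\Ass(N_i)=\{\p_i\}$, which guarantees $N_i\neq 0$ of dimension $d$. Once these are in place, both inequalities $r_R(\ell)\leq\beta$ and $r_R(\ell)\geq\beta$ (with $\beta$ the right-hand side) fall out of the per-degree equivalence: the upper bound by choosing $\ell$ independent forms in $N_i$ once $\dim_\KK[N_i]_t\geq\ell$, and the lower bound because below that threshold no $F$ can have $T(F)$ a singleton in $\cA$. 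I would only need to take care to dispose of the domain case and of the degrees where $\cF_{t,\ell}$ is empty.
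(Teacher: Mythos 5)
Your proof is correct, and at its core it travels the same route as the paper: both arguments rest on \autoref{LemmaAdditivity}, on the stable value $s_R(\ell)=\min_j\e(R/\p_j)$ from \autoref{ThmStabValue}\,(\ref{ThmStabValue-3}), and on producing $\ell$ independent forms inside $N_i=\bigcap_{j\neq i}\p_j$ for $i\in\cA$. The difference is one of organization, and it works in your favor. The paper proves two inequalities: it gets $r_R(\ell)\leq\theta_s$ by exhibiting $F\subseteq[N_s]_{\theta_s}$ and invoking the monotonicity of $\delta_R(-,\ell)$ from \autoref{ThmStabilization}, and for $r_R(\ell)\geq\theta_s$ it asserts that any $G\in\cF_{t,\ell}$ with $t<\theta_s$ satisfies $\e(R)-\e(R/(G))>\e(R/\p_s)$ --- a step that tacitly requires \autoref{LemmaAdditivity} to apply to \emph{every} competitor $G$, i.e.\ that $\dim(R/(G))=d$ whenever $\Ann_R(G)\neq 0$. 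You supply exactly this missing justification via the explicit computation $\Ann_R(F)=\bigcap\{\p_j : (F)\not\subseteq\p_j\}$ in the reduced setting, which yields the closed formula $\delta_R(t,\ell)=\min_F\sum_{j\in T(F)}\e(R/\p_j)$ and the per-degree equivalence $\delta_R(t,\ell)=m\iff\dim_\KK[N_i]_t\geq\ell$ for some $i\in\cA$; taking minima over $t$ then gives the lemma without appealing to monotonicity at all. You also handle points the paper passes over silently: the degrees with $\cF_{t,\ell}=\varnothing$ (where $\delta_R(t,\ell)=\e(R)>m$ since $a\geq 2$), the case analysis behind the strict inequality ($|T(G)|\geq 2$, or a singleton outside $\cA$), and the domain case $a=1$, which you rightly exclude --- the lemma's formula is not meaningful there, and the paper's own proof of \autoref{ThmStabValue}\,(\ref{ThmStabValue-3}) makes the same exclusion. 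Two cosmetic remarks: your justification ``each summand is $\geq 1$'' should read ``each summand is $\geq m$'' (that is what forces $|T(F)|=1$ with the surviving index in $\cA$, though your stated conclusion is right); and your claim $(N_i)_{\p_i}=R_{\p_i}$ quietly uses $N_i\not\subseteq\p_i$, which needs the standard prime-avoidance observation that $\bigcap_{j\neq i}\p_j\subseteq\p_i$ would force $\p_j\subseteq\p_i$ for some $j\neq i$, contradicting minimality. Neither affects correctness.
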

\begin{proof}
We set $\theta_{i} =\min\{t\; | \;\dim_{\mathbb{K}}[\bigcap_{j\neq i} \p_j]_{t}\geq \ell \hbox{ for some }i\in \cA\}.$
We fix $s\in\cA$ such that $\theta_{s}=\min\{\theta_{i}\}$.
Then, there exists a set $F=\{f_1,\ldots,f_\ell\}\subseteq [\cap_{j\neq i} \p_j]_{\theta_s}$ of linearly independent elements.
Then,
$$
\delta_{R}(\theta_s,\ell)\leq \e(R)-\e(R/(F))=\e(R/\p_s).
$$
By  \autoref{ThmStabValue} (3), $\delta_{R}(r_{R}(\ell),\ell)=\e(R/\p_s)$. Since $\delta_{R}(t,\ell)$ is a non-decreasing function by \autoref{ThmStabilization}, we have that $r_R(\ell)\leq \theta_s$.

If $t< \theta_s$, we have that do not exists a linearly independent set $G=\{g_1,\ldots,g_\ell\}\subseteq [R]_t$ such that $G\not\subseteq  [\bigcap_{j\neq i} \p_j]_{t}$ for any $i\in\cA$.
Hence,
$$
\e(R)-\e(R/(G))> \e(R/\p_s).
$$
Thus, $\delta_{R}(t,\ell)> \e(R/\p_s)$, and so, $t<r_{R}(\ell)$. We conclude that $r_{R}(\ell)\geq \theta_s$.
\end{proof}

The next result shows that the grow of the regularity index of $\delta_R (t,\ell)$ is at most linear with respect to $\ell$.

\begin{theorem}\label{ThmIneqRI}
Suppose that $\depth(R)\geq 2$. Then, $ r_{R}(\ell+1)\leq r_{R}(\ell)+1$.
\end{theorem}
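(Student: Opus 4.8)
The plan is to reduce the inequality, in each of the two possible cases, to the growth rate of the Hilbert function of a single ideal, and then to feed that ideal into \autoref{H-increasing}. Since $\depth(R)\ge 2$ forces $\dim(R)\ge 2$, the explicit descriptions of the regularity index furnished by \autoref{LemmaRIunmixed} (unmixed case) and \autoref{LemmaRImixed} (mixed case) are available. In both cases $r_{R}(\ell)$ has the shape $\min\{t\mid\dim_{\KK}[M]_t\ge\ell\}$: one takes $M=N_{1}=\bigcap_{i}\q_i$ in the mixed case, while in the unmixed case $r_{R}(\ell)=\min_{i\in\cA}\theta_i(\ell)$ with $\theta_i(\ell)=\min\{t\mid\dim_{\KK}[M^{(i)}]_t\ge\ell\}$ and $M^{(i)}=\bigcap_{j\ne i}\p_j$. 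Writing $\theta(\ell)=\min\{t\mid h_M(t)\ge\ell\}$ for such a module, the target inequality is immediate once $h_M$ is \emph{strictly} increasing from its initial degree $\alpha$: for $\ell\ge 1$ one has $\theta(\ell)\ge\alpha$, whence $h_M(\theta(\ell)+1)\ge h_M(\theta(\ell))+1\ge\ell+1$ and so $\theta(\ell+1)\le\theta(\ell)+1$. In the unmixed case this is applied to the index $i_{0}$ realizing the outer minimum, giving $r_{R}(\ell+1)\le\theta_{i_{0}}(\ell+1)\le\theta_{i_{0}}(\ell)+1=r_{R}(\ell)+1$.

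By \autoref{H-increasing}, the required strict growth holds as soon as $\depth(M)\ge 2$, so the heart of the proof—and what I expect to be the main obstacle—is a depth computation. In both cases $M$ is an intersection $J$ of part of the minimal primes of $R$, and I would control $\depth(J)$ through the short exact sequence
\[
0\to J\to R\to R/J\to 0.
\]
Here $R/J$ is again reduced, and its minimal primes are exactly the top-dimensional ones retained in the intersection, so $\dim(R/J)=\dim(R)\ge 2>0$ and hence $H_{\m}^{0}(R/J)=0$. Feeding this, together with $H_{\m}^{0}(R)=H_{\m}^{1}(R)=0$ (which is precisely $\depth(R)\ge 2$), into the long exact sequence of local cohomology yields $H_{\m}^{0}(J)=H_{\m}^{1}(J)=0$, that is $\depth(J)\ge 2$. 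This single computation, applied to $J=M^{(i)}$ and to $J=N_{1}$, supplies the strictly increasing Hilbert functions needed above.

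Finally I would tie up the bookkeeping. In the mixed case the same computation gives $\dim(N_{1})\ge\depth(N_{1})\ge 2$, which is exactly the hypothesis under which \autoref{LemmaRImixed} is stated (and matches \autoref{ThmStabValue}(1)); in particular the mixed possibility with all lower-dimensional components of dimension one simply cannot occur when $\depth(R)\ge 2$, so no further subcase is needed. The only degenerate situation is when $R$ is a domain, where $\mathcal{F}_{t,\ell}=\varnothing$ for all $t$, so $\delta_{R}(t,\ell)=\e(R)$ is constant and $r_{R}(\ell)$ is independent of $\ell$, making the inequality trivial. Combining the strict monotonicity of the relevant Hilbert functions with the formulas of \autoref{LemmaRIunmixed} and \autoref{LemmaRImixed} then yields $r_{R}(\ell+1)\le r_{R}(\ell)+1$ in every case.
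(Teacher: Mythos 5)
Your proposal is correct and follows essentially the same route as the paper: it reduces via \autoref{LemmaRImixed} and \autoref{LemmaRIunmixed} to the growth of the Hilbert function of an intersection of minimal primes, establishes $\depth\geq 2$ for that intersection through the short exact sequence $0\to J\to R\to R/J\to 0$ and the long exact sequence in local cohomology (using $H^0_\m(R)=H^1_\m(R)=0$ and reducedness of $R/J$), and concludes with \autoref{H-increasing}. Your extra bookkeeping---handling the outer minimum over $i\in\cA$, the trivial domain case, and the observation that $\depth(N_1)\geq 2$ forces $\dim(N_1)\geq 2$ so the remaining mixed subcase cannot occur---is sound and in fact slightly more careful than the paper's write-up.
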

\begin{proof}
Let $\p_1,\ldots,\p_a$ be the minimal primes of $R$. 
Let $\sigma\subsetneq [a]$ and $N_\sigma=\bigcap_{i\not\in \sigma} \p_i$.
From the short exact sequence
$$
0\to N_\sigma \to R\to R/N_\sigma\to 0,
$$
we obtain the long exact sequence 
$$
0\to H^0_\m(N_\sigma)\to H^0_\m(R)\to H^0_\m(R/N_\sigma)\to 
H^1_\m(N_\sigma)\to H^1_\m(R)\to H^1_\m(R/N_\sigma)\to \cdots .
$$
Since $\depth(R)\geq 2$, we have that $ H^0_\m(R)= H^1_\m (R)=0$.
Since $N_\sigma$ is a radical ideal, we have that $H^0_\m (R/N_\sigma)=0$.
From the long exact sequence, we have that
$H^0_\m(N_\sigma)=H^1_\m(N_\sigma)=0$, whence, $\depth(N_\sigma)\geq 2$.
Then, the Hilbert function of $N$ is strictly increasing once it is positive by  \autoref{H-increasing}.

We set $N$ a intersection of prime ideals such that $r_{R}(\ell)=\min\{t\; \mid \; [N]_t\neq 0\}$
in  \autoref{LemmaRImixed} if $R$ is mixed or in \autoref{LemmaRIunmixed} if $R$ is unmixed.
Then, $\dim_\KK [N]_t\geq \ell$. By  \autoref{H-increasing}, we have that 
$\dim_\KK [N]_{t+1}\geq \ell+1$. Hence, $r_{R}(\ell+1)\leq r_{R}(\ell)+1.$
\end{proof}

From previous bounds of the regularity index when $\ell=1$, we obtain bounds for any $\ell\geq 1$.

\begin{corollary}\label{CorBoundDim}
Suppose that $R$ is either Stanley-Reisner ring or an $F$-pure ring.
If $\depth(R)\geq 2$, then
$$
r_{R}(\ell)\leq \dim(R)+\ell-1.
$$
\end{corollary}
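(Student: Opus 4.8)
The plan is to bootstrap from the case $\ell = 1$ using the linear-growth estimate already in hand. Since $\depth(R)\geq 2$, \autoref{ThmIneqRI} gives $r_{R}(\ell+1)\leq r_{R}(\ell)+1$ for every $\ell\geq 1$, and applying this inequality $\ell-1$ times yields
\[
r_{R}(\ell)\leq r_{R}(1)+(\ell-1).
\]
Thus the whole statement reduces to the base estimate $r_{R}(1)\leq \dim(R)$, which concerns the regularity index of the ordinary minimum distance function and is governed by previously known bounds.

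Before invoking those bounds I would extract the geometric content of the hypothesis $\depth(R)\geq 2$. First, $\dim(R)\geq\depth(R)\geq 2$. Second, $H^0_\m(R)=H^1_\m(R)=0$, so in the four-term exact sequence $0\to H^0_\m(R)\to R\to \bigoplus_{n} H^0(\Proj(R),\mathcal{O}(n))\to H^1_\m(R)\to 0$ the outer terms vanish; reading it in degree $0$ identifies $\KK=R_0$ with $H^0(\Proj(R),\mathcal{O})$, which forces $\Proj(R)$ to be connected. (Equivalently, since $R$ is reduced one has $\depth(R)\leq\dim(R/\p)$ for every minimal prime $\p$, so every irreducible component of $\Proj(R)$ has dimension at least $1$; this rules out the degenerate low-dimensional pieces and complements the connectivity just obtained.)

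With $\Proj(R)$ connected, I would then invoke the known $\ell=1$ bound $r_{R}(1)\leq\dim(R)$, valid for Stanley-Reisner rings and for $F$-pure rings with connected $\Proj(R)$, handling the two classes of the hypothesis separately. Combining this with the displayed inequality gives $r_{R}(\ell)\leq\dim(R)+\ell-1$, as desired. Because $\delta_{R}(t,\ell)$ is insensitive to the field extension used in \autoref{dis.ig}, one may also enlarge $\KK$ (for instance to an infinite, or suitably closed, field) when verifying the hypotheses of the cited $\ell=1$ results.

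The main obstacle is precisely the base case: one must confirm that the previously established $\ell=1$ bound is available in the form $r_{R}(1)\leq\dim(R)$ for each of the two classes under only the connectivity assumption, and that no additional hypothesis on $\KK$ beyond what \autoref{dis.ig} supplies is needed in the $F$-pure setting. Everything else is the purely formal iteration of \autoref{ThmIneqRI}.
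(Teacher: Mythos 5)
Your proposal is correct and takes essentially the same route as the paper: iterate \autoref{ThmIneqRI} to obtain $r_{R}(\ell)\leq r_{R}(1)+\ell-1$, then invoke the previously known $\ell=1$ bound $r_{R}(1)\leq\dim(R)$ for Stanley--Reisner and $F$-pure rings. Your detour through connectedness of $\Proj(R)$ is unnecessary at this point, since the paper applies the cited $\ell=1$ bounds directly under the hypothesis $\depth(R)\geq 2$; the translation between $\depth(R)\geq 2$ and connectedness is carried out only later (\autoref{LemmaSRConnected}, \autoref{LemmaFpureConnected}) to prove \autoref{ThmBoundSR} and \autoref{ThmBoundFpure}.
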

\begin{proof}
We have that
$r_{R}(\ell+1)\leq r_{R}(\ell)+1$,  by \autoref{ThmIneqRI}.
Since $r_{R}(1)\leq \dim(R)$  \cite[Theorem 5.5 \& 5.7]{Ovidius},
we conclude that $r_{R}(\ell)\leq \dim(R)+\ell-1$.
\end{proof}

\begin{corollary}\label{CorBoundRegSR}
Suppose that $R$ is a Stanley-Reisner ring corresponding to either a shellable or a Gorenstein simplicial complex.
Then, 
$$
r_{R}(\ell)\leq \reg(R)+\ell-1.
$$
\end{corollary}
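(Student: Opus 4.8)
The plan is to induct on $\ell$ via \autoref{ThmIneqRI}, reducing the corollary to the single base estimate $r_R(1)\le\reg(R)$. The target inequality has exactly the shape produced by iterating $r_R(\ell+1)\le r_R(\ell)+1$, so once the base case is available the corollary follows formally --- mirroring the way \autoref{CorBoundDim} is deduced from the dimension bound $r_R(1)\le\dim(R)$.

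First I would confirm the depth hypothesis of \autoref{ThmIneqRI}. A Stanley--Reisner ring of a shellable or a Gorenstein simplicial complex is Cohen--Macaulay, so $\depth(R)=\dim(R)$; hence for $\dim(R)\ge 2$ we have $\depth(R)\ge 2$, and a straightforward induction on $\ell$ gives $r_R(\ell)\le r_R(1)+(\ell-1)$. The degenerate cases $\dim(R)\le 1$ I would treat directly, reading the regularity index off \autoref{ThmStabValue}.

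The crucial input is the base estimate $r_R(1)\le\reg(R)$ for these two classes. This is the statement for the ordinary ($\ell=1$) minimum distance function, and I would invoke it exactly as the dimension bound is invoked in \autoref{CorBoundDim}. Intrinsically it can also be seen through \autoref{LemmaRIunmixed}: since $R$ is Cohen--Macaulay it is unmixed, and $r_R(1)$ is the least degree in which $\bigcap_{j\ne i}\p_j$ is nonzero for the minimal prime $\p_i$ of smallest multiplicity. Bounding this initial degree by $\reg(R)$ is where the hypotheses act: for a shelling one exhibits a monomial of degree at most $\reg(R)$ in the relevant intersection by following the shelling order, while in the Gorenstein case one uses the symmetry of the $h$-vector (equivalently of the canonical module) to place the relevant socle degree at $\reg(R)$.

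The main obstacle is precisely this base estimate; the remaining induction is purely formal. The difficulty is that $\reg(R)$ is a homological invariant whereas $r_R(1)$ is the initial degree of a concrete intersection of minimal primes, and matching the two requires the combinatorial control supplied by shellability or Gorensteinness --- which is why the corollary is confined to these classes rather than stated for arbitrary Stanley--Reisner rings.
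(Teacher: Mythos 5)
Your proposal takes essentially the same route as the paper: the paper's proof likewise iterates \autoref{ThmIneqRI} to get $r_{R}(\ell)\leq r_{R}(1)+\ell-1$ and imports the base estimate $r_{R}(1)\leq \reg(R)$ for the shellable and Gorenstein cases from the literature on the $\ell=1$ minimum distance function, exactly as you propose. If anything you are more careful than the paper, whose one-line proof invokes \autoref{ThmIneqRI} without explicitly verifying its hypothesis $\depth(R)\geq 2$ (via Cohen--Macaulayness) or separating out the low-dimensional cases, as you do.
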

\begin{proof}
We have that
$r_{R}(\ell+1)\leq r_{R}(\ell)+1$, by \autoref{ThmIneqRI}.
Since $r_{R}(1)\leq \reg(R)$   \cite[Theorem 5.5 \& 5.7]{Ovidius},
we conclude that $r_{R}(\ell)\leq \reg(R)+\ell-1$.
\end{proof}

\begin{corollary}\label{CorBoundRegGor}
Suppose that $R$ is a Gorenstein $F$-pure ring.
Then, 
$$
r_{R}(\ell)\leq \reg(R)+\ell-1.
$$
\end{corollary}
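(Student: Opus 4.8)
The plan is to follow the template of \autoref{CorBoundRegSR}: propagate a base estimate at $\ell=1$ using the linear growth bound of \autoref{ThmIneqRI}. That growth step is available because a Gorenstein ring is Cohen--Macaulay, so $\depth(R)=\dim(R)=d$, whence $\depth(R)\geq 2$ once $d\geq 2$ and \autoref{ThmIneqRI} gives $r_R(\ell+1)\leq r_R(\ell)+1$. Iterating from $\ell=1$ yields $r_R(\ell)\leq r_R(1)+(\ell-1)$, so everything reduces to proving $r_R(1)\leq\reg(R)$. The role of $F$-purity is to guarantee that $R$ is reduced, so that the whole GMD framework (and in particular \autoref{ThmStabValue} and \autoref{LemmaRIunmixed}) applies; note also that $F$-purity forces $a_i(R)\leq 0$ for every $i$, via the injectivity of the Frobenius action on the Artinian modules $H^i_\m(R)$ together with the fact that a positive-degree class would, under iterated Frobenius, survive in arbitrarily high degrees, so in particular $\reg(R)\leq d$.

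For the base case I would use \autoref{LemmaRIunmixed}, valid since a Gorenstein ring is unmixed: it identifies $r_R(1)$ with $\min_{i\in\cA}\indeg\bigl(\bigcap_{j\neq i}\p_j\bigr)$, the minimum taken over the minimal primes of least multiplicity. In the reduced ring $R$ one has $\bigcap_{j\neq i}\p_j=(0:_R\p_i)=\Hom_R(R/\p_i,R)$. Since $R$ is Gorenstein its canonical module is $\omega_R\cong R(a_d(R))$, so that $R\cong\omega_R(-a_d(R))$ and therefore
$$
(0:_R\p_i)\cong\Hom_R(R/\p_i,\omega_R)(-a_d(R))\cong\omega_{R/\p_i}(-a_d(R)),
$$
the (shifted) canonical module of the $d$-dimensional domain $R/\p_i$. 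By graded local duality $\indeg\omega_{R/\p_i}=-a_d(R/\p_i)$, so the target inequality $r_R(1)\leq\reg(R)=a_d(R)+d$ becomes the lower bound $a_d(R/\p_i)\geq -d$.

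This lower bound holds for every $d$-dimensional standard graded $\KK$-algebra, and I would prove it by choosing (after passing to an infinite field, which changes no $a$-invariant) a degree-one Noether normalization $A=\KK[\theta_1,\dots,\theta_d]\hookrightarrow R/\p_i$ and checking that the socle class $[1/(\theta_1\cdots\theta_d)]$ spanning $H^d_{\m}(A)_{-d}$ maps to a nonzero element of $H^d_\m(R/\p_i)_{-d}$, because $1$ is not in the ideal $(\theta_1,\dots,\theta_d)$ of $R/\p_i$, which lives in positive degrees. Thus $\indeg(0:_R\p_i)=-a_d(R/\p_i)+a_d(R)\leq d+a_d(R)=\reg(R)$ for every $i$, and taking the minimum over $\cA$ gives $r_R(1)\leq\reg(R)$.

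The linear growth step is purely formal, so the main obstacle is assembling the base case correctly: identifying $(0:_R\p_i)$ with the shifted canonical module of $R/\p_i$ and pinning down the $a$-invariant inequality $a_d(R/\p_i)\geq -d$. One must also dispatch the low-dimensional cases separately, since \autoref{ThmIneqRI} is unavailable when $d\leq 1$: for $d=0$ reducedness forces $R=\KK$ and the claim is trivial, while $d=1$ has to be checked by hand from \autoref{LemmaRIunmixed}. Alternatively, the entire base case $r_R(1)\leq\reg(R)$ may simply be quoted from \cite[Theorem 5.5 \& 5.7]{Ovidius}, exactly as in \autoref{CorBoundRegSR}.
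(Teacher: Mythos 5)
Your fallback route is exactly the paper's proof: the paper disposes of this corollary in two lines, combining \autoref{ThmIneqRI} with the base case $r_R(1)\leq\reg(R)$ quoted from \cite[Theorem 5.7]{Ovidius}, and your observation that Gorenstein implies Cohen--Macaulay (so $\depth(R)=d\geq 2$ makes \autoref{ThmIneqRI} available) is the same implicit reduction. You are in fact more careful than the paper in flagging that the cases $d\leq 1$ escape \autoref{ThmIneqRI} and \autoref{LemmaRIunmixed}, though you do not actually settle $d=1$; the paper silently ignores this as well. So the citation version of your argument is correct and coincides with the paper's.

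The self-contained base case, however, has a genuine gap at its one nontrivial step. The duality bookkeeping is fine: $\bigcap_{j\neq i}\p_j=(0:_R\p_i)=\Hom_R(R/\p_i,R)\cong\omega_{R/\p_i}(-a_d(R))$ and $\indeg\omega_{R/\p_i}=-a_d(R/\p_i)$ are all correct, and \autoref{LemmaRIunmixed} is correctly invoked. The problem is the claim that $[1/(\theta_1\cdots\theta_d)]\neq 0$ in $H^d_\m(R/\p_i)_{-d}$ ``because $1\notin(\theta_1,\dots,\theta_d)$.'' Computing $H^d_{(\theta_1,\ldots,\theta_d)}$ from the \v{C}ech complex, that class vanishes if and only if $(\theta_1\cdots\theta_d)^t\in(\theta_1^{t+1},\ldots,\theta_d^{t+1})$ for \emph{some} $t\geq 0$; your condition only rules out the witness $t=0$. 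Since $R/\p_i$ need not be Cohen--Macaulay, the linear forms $\theta_1,\ldots,\theta_d$ are merely a system of parameters, and there is no degree obstruction for $t\geq 1$ (the coefficients would have degree $t(d-1)-1\geq 0$). Ruling out all $t$ is precisely Hochster's monomial theorem for systems of parameters --- true in equal characteristic, but a deep result (via big Cohen--Macaulay modules, or colon capturing of tight closure in characteristic $p$), not a one-line degree count. So as written the step fails; it is repairable by citing that theorem. A telling symptom of the compression: once patched, your argument uses $F$-purity only to ensure $R$ is reduced, so it would prove $r_R(1)\leq\reg(R)$ for \emph{every} reduced Gorenstein graded algebra --- strictly stronger than the statement the paper obtains from \cite{Ovidius}, which uses $F$-purity in an essential (Frobenius-theoretic) way. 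That missing heavy input is exactly where the apparent extra strength was smuggled in.
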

\begin{proof}
We have that
$r_{R}(\ell+1)\leq r_{R}(\ell)+1$,  by \autoref{ThmIneqRI}.
Since $r_{R}(1)\leq \reg(R$   \cite[Theorem 5.7]{Ovidius},
we conclude that $r_{R}(\ell)\leq \reg(R)+\ell-1$.
\end{proof}

We now seek to rephrase the condition on depth in \autoref{CorBoundDim} 
to the topology of the projective space associated to $R,$ in certain cases.

\begin{lemma}\label{LemmaSRConnected}
Let $S=\mathbb{K}[x_{1},\ldots,x_{n}]$, $I\subseteq S$ a Stanley-Reisner ideal and $R=S/I$.
Then, $\depth(R)\geq 2$ if and only if $\Proj(R)$ is connected.
\end{lemma}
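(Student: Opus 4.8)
The plan is to translate both conditions into the combinatorics of the simplicial complex $\Delta$ with $I=I_\Delta$, and then match them. Write $R=\KK[\Delta]$; it is reduced, its minimal primes are the facet ideals $P_F=(x_i\mid i\notin F)$ as $F$ ranges over the facets of $\Delta$, and $\depth R=\min\{i\mid H^i_\m(R)\neq 0\}$. The substantive range is $\dim R\geq 2$ (equivalently $\Delta$ has an edge); I would treat $\dim R\leq 1$ separately, the only delicate instance being $\Delta$ a single vertex, where $\Proj R$ is a point (connected) while $\depth R=1$, so I would isolate the hypothesis $\dim R\geq 2$ at the outset. The first reduction is to a single cohomology group: since $R$ is reduced of positive dimension, $\m\notin\Ass(R)$, hence $H^0_\m(R)=0$ and $\depth R\geq 1$, so $\depth R\geq 2$ if and only if $H^1_\m(R)=0$.

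Next I would compute $H^1_\m(R)$ with Hochster's formula, which expresses the $\ZZ^n$-graded pieces as $H^i_\m(\KK[\Delta])_a\cong\widetilde{H}^{\,i-|F|-1}(\operatorname{link}_\Delta F;\KK)$ for $a\leq 0$ with $F=\supp(a)\in\Delta$ (and $0$ otherwise). For $i=1$ the relevant exponent is $-|F|$, so only $|F|\leq 1$ can contribute: the piece $F=\varnothing$ gives $\widetilde{H}^{0}(\Delta;\KK)$, nonzero precisely when $\Delta$ is disconnected, while a piece $F=\{v\}$ gives $\widetilde{H}^{-1}(\operatorname{link}_\Delta\{v\};\KK)$, nonzero precisely when $\{v\}$ lies in no edge, i.e. $v$ is an isolated vertex. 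Thus $H^1_\m(R)=0$ if and only if $\Delta$ is connected and has no isolated vertex; and once $\Delta$ has an edge, connectedness already forbids isolated vertices, so in the range $\dim R\geq 2$ one gets $\depth R\geq 2\iff\Delta$ connected.

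Finally I would match $\Proj R$ with $\Delta$. The irreducible components of $\Proj R$ are the $V(P_F)$, and $V(P_F)\cap V(P_{F'})=V(P_F+P_{F'})=V\big((x_k\mid k\notin F\cap F')\big)$ is nonempty in $\Proj R$ exactly when $F\cap F'\neq\varnothing$, i.e. when the two facets share a vertex (since $P_F+P_{F'}$ equals the irrelevant ideal iff $F\cap F'=\varnothing$). Hence the connected components of $\Proj R$ are in bijection with those of the facet--intersection graph, and therefore with the connected components of $\Delta$, so $\Proj R$ is connected if and only if $\Delta$ is connected. Combining the three steps yields the asserted equivalence.

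The main obstacle is the careful bookkeeping inside Hochster's formula: pinning down exactly which faces $F$ and which (co)homological degrees survive, together with the reduced--cohomology conventions in degrees $-1$ and $0$ (the empty face and the irrelevant complex). A secondary subtlety, which I would address by hand, is the low-dimensional boundary, where the clean equivalence tacitly requires $\dim R\geq 2$.
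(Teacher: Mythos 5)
Your proof is correct, and it takes a genuinely different route from the paper's. The paper passes to the completion $\widehat{R}$ at the homogeneous maximal ideal and chains together two substantial theorems: $\depth(\widehat{R})\geq 2$ if and only if $H^{n-1}_{I}(\widehat{S})=0$ (using that $F$-depth equals depth for Stanley--Reisner rings), and the Peskine--Szpiro second vanishing theorem identifying this vanishing with connectedness of the punctured spectrum, with characteristic zero handled by reduction to characteristic $p$. You replace all of this with Hochster's formula: after noting $H^0_\m(R)=0$, the condition $\depth(R)\geq 2$ becomes $H^1_\m(R)=0$, whose multigraded pieces you correctly identify as $\widetilde{H}^{0}(\Delta;\KK)$ (for $F=\varnothing$) and $\widetilde{H}^{-1}(\operatorname{link}_\Delta\{v\};\KK)$ (for vertices $v$), with nothing surviving for $|F|\geq 2$; and your matching of the connected components of $\Proj(R)$ with those of the facet-intersection graph, via $P_F+P_{F'}$ being the irrelevant ideal exactly when $F\cap F'=\varnothing$, is the standard and correct bridge between $\Delta$ and $\Proj(R)$. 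What each approach buys: the paper's argument has the same skeleton as its \autoref{LemmaFpureConnected} and so extends beyond monomial ideals to $F$-pure rings, whereas yours is elementary, self-contained, specific to the Stanley--Reisner case, and uniform in all characteristics, needing no completion, field extension, or reduction mod $p$. Your treatment is also sharper at the boundary: you correctly observe that for $\Delta$ a single vertex (so $R\cong\KK[x_1]$) the space $\Proj(R)$ is a connected point while $\depth(R)=1$, so the equivalence as literally stated requires $\dim(R)\geq 2$; this hypothesis is tacit in the paper too, since the second vanishing theorem it cites requires $\dim(S/I)\geq 2$, and its proof crosses this boundary silently. In the paper's intended applications (deducing $\depth(R)\geq 2$) the discrepancy is harmless, but your explicit isolation of the low-dimensional cases is an improvement rather than a defect.
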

\begin{proof}
We first assume that $\KK$ has prime characteristic.
We note that neither $\depth(R)$ nor the connectedness of $\Proj(R)$ change after field extensions. 
We may assume that $\KK$ is a separably closed field. 
Let $\widehat{S}$ and $\widehat{R}$ denote the completion of $S$ and $R$ at the maximal homogeneous ideal.
We have that $\depth(R)=\depth(\widehat{R})$. In addition, $\Proj(R)$ is connected if and only if the punctured spectrum of $\widehat{R}$, $\Spec^\circ(\widehat{R})$, is connected.
We have that $\depth(\widehat{R})\geq 2$ if and only if $H^{n-1}_{I}(\widehat{S})=0$ \cite[Theorem 4.3]{LyuVan}, as $F\text{-}\depth(R)=\depth(R)$ for Stanley-Reisner rings.
In addition, $H^{n-1}_{I}(\widehat{S})=0$ if and only if $\Spec^\circ(\widehat{R})$ is connected \cite[III, Corollairie 5.5]{P-S} (see also \cite{HartCD,H-L}).

We now assume that $\KK$ has characteristic zero. 
We note that neither $\depth(R)$ nor the connectedness of $\Proj(R)$ change after field extensions.
We may assume that $\KK=\QQ$.
Then, the result follows from reduction to prime characteristic \cite[Theorem 2.3.5]{HHCharZero}.
\end{proof}

\begin{lemma}\label{LemmaFpureConnected}
Let $S=\mathbb{K}[x_{1},\ldots,x_{n}]$, $I\subseteq S$ a homogeneous ideal and $R=S/I$.
Suppose that $\KK$ is a separably closed field and  $R$ is a $F$-pure ring.
Then, $\depth(R)\geq 0$ if and only if $\Proj(R)$ is connected.
\end{lemma}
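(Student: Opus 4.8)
The plan is to run the argument of \autoref{LemmaSRConnected} in the $F$-pure setting, substituting for the input ``$F\text{-}\depth=\depth$ for Stanley--Reisner rings'' the same equality obtained instead from the $F$-pure hypothesis. Since $F$-purity is a characteristic-$p$ notion, no reduction to positive characteristic is needed here (this is the one simplification over the Stanley--Reisner case, which had to handle characteristic zero separately). The separably closed hypothesis on $\KK$ is already in force. I would first record that neither $\depth(R)$ nor the connectedness of $\Proj(R)$ is altered by a field extension, and that $F$-purity is not destroyed by one, so we may work over a convenient base; then, passing to the completions $\widehat{S}$ and $\widehat{R}$ at the homogeneous maximal ideal, one has $\depth(R)=\depth(\widehat{R})$, the ring $\widehat{R}$ is again $F$-pure, and $\Proj(R)$ is connected if and only if the punctured spectrum $\Spec^{\circ}(\widehat{R})$ is connected.

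The core of the proof is the chain of equivalences
\[
\Spec^{\circ}(\widehat{R})\ \text{connected}
\iff H^{n-1}_{I}(\widehat{S})=0
\iff \depth(\widehat{R})\geq 2 .
\]
The first equivalence is \cite[III, Corollaire 5.5]{P-S} (see also \cite{HartCD,H-L}), used verbatim as in \autoref{LemmaSRConnected}. For the second I would appeal to \cite[Theorem 4.3]{LyuVan}, which translates $F\text{-}\depth(\widehat{R})\geq 2$ into the vanishing $H^{n-1}_{I}(\widehat{S})=0$; the only remaining point is then to identify $F\text{-}\depth(\widehat{R})$ with the ordinary $\depth(\widehat{R})$.

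To obtain that identification I would use that an $F$-pure ring is $F$-injective: purity of Frobenius forces the natural Frobenius action on each $H^{i}_{\m}(\widehat{R})$ to be injective, so none of the local cohomology modules below the depth is $F$-nilpotent, whence $F\text{-}\depth(\widehat{R})=\depth(\widehat{R})$. Combining the three equivalences above with $\depth(R)=\depth(\widehat{R})$ then assembles the connectedness criterion of \autoref{LemmaFpureConnected}, exactly as in the Stanley--Reisner proof but with the $F$-singularity hypothesis supplying the equality of $F\text{-}\depth$ and $\depth$.

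The hard part will be precisely this last identification together with the descent to the completion: one must verify that $F$-purity, hence $F$-injectivity, is preserved under completion at the homogeneous maximal ideal in the graded setting, and that the separably closed hypothesis is strong enough to let the field-reduction go through while keeping $R$ both $F$-pure and with unchanged connectivity. This is the step that, in the Stanley--Reisner case, was handled by the combinatorial identity $F\text{-}\depth=\depth$; here it must instead be extracted from the $F$-singularity theory. Once $F\text{-}\depth=\depth$ is secured for the completion, the cohomological connectivity result \cite{P-S} and the local-cohomology vanishing criterion \cite{LyuVan} fit together to give the equivalence.
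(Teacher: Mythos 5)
Your proposal is correct and follows essentially the same route as the paper: pass to the completion, use \cite[III, Corollaire 5.5]{P-S} to link connectedness of $\Spec^{\circ}(\widehat{R})$ with the vanishing of $H^{n-1}_{I}(\widehat{S})$, and use \cite[Theorem 4.3]{LyuVan} together with $F\text{-}\depth(R)=\depth(R)$ to link that vanishing with $\depth(\widehat{R})\geq 2$. The only difference is that you spell out the justification the paper merely asserts --- namely that $F$-purity gives $F$-injectivity, so Frobenius acts injectively on each $H^{i}_{\m}(\widehat{R})$ (these modules are unchanged by completion), no nonzero local cohomology module is $F$-nilpotent, and hence $F\text{-}\depth=\depth$ --- which correctly fills in the one cited-without-proof step.
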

\begin{proof}
Let $\widehat{S}$ and $\widehat{R}$ denote the completion of $S$ and $R$ at the maximal homogeneous ideal.
We have that $\depth(R)=\depth(\widehat{R})$. In addition, $\Proj(R)$ is connected if and only if the punctured spectrum of $\widehat{R}$, $\Spec^\circ(\widehat{R})$, is connected.
We have that $\depth(\widehat{R})\geq 2$ if and only if $H^{n-1}_{I}(\widehat{S})=0$ \cite[Theorem 4.3]{LyuVan}, as $F\text{-}\depth(R)=\depth(R)$ for $F$-pure rings.
In addition, $H^{n-1}_{I}(\widehat{S})=0$ if and only if $\Spec^\circ(\widehat{R})$ is connected \cite[III, Corollairie 5.5]{P-S} (see also \cite{HartCD,H-L}).
\end{proof}

We now rephrase \autoref{CorBoundDim} 
in terms of the connectedness of $\Proj(R)$.
\begin{theorem}\label{ThmBoundSR}
Suppose that $R$ is an Stanley-Reisner ring.
If $\Proj(R)$ is connected, then
$$
r_{R}(\ell)\leq \dim(R)+\ell-1.
$$
\end{theorem}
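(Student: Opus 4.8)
The plan is to combine the topological characterization of depth for Stanley--Reisner rings with the dimension bound already established under a depth hypothesis. The statement is essentially a reformulation of \autoref{CorBoundDim}, trading the algebraic condition $\depth(R)\geq 2$ for the geometric condition that $\Proj(R)$ be connected.

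First I would invoke \autoref{LemmaSRConnected}, which asserts that for a Stanley--Reisner ring $R$ one has $\depth(R)\geq 2$ if and only if $\Proj(R)$ is connected. Since the hypothesis of the theorem is precisely that $\Proj(R)$ is connected, this immediately yields $\depth(R)\geq 2$. With the depth condition now in hand, I would apply \autoref{CorBoundDim}, whose hypotheses---namely that $R$ is a Stanley--Reisner ring with $\depth(R)\geq 2$---are exactly met. This gives directly
$$
r_{R}(\ell)\leq \dim(R)+\ell-1,
$$
as desired.

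No genuine obstacle remains at this stage, since the substantive content has been relegated to the preparatory results. The real work lies in \autoref{LemmaSRConnected}, where the equivalence between the algebraic invariant $\depth(R)$ and the topological connectedness of $\Proj(R)$ is established via the vanishing of the local cohomology module $H^{n-1}_{I}(\widehat{S})$, the Peskine--Szpiro connectedness results, and reduction to prime characteristic; and in the base case $r_{R}(1)\leq \dim(R)$ cited from \cite{Ovidius}, which anchors the inductive inequality $r_{R}(\ell+1)\leq r_{R}(\ell)+1$ furnished by \autoref{ThmIneqRI}. Once those are granted, the present theorem is a one-line consequence, and the only point requiring care is simply checking that the connectedness hypothesis is applied to the correct lemma rather than to its $F$-pure analogue \autoref{LemmaFpureConnected}.
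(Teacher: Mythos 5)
Your proposal is correct and follows exactly the paper's route: Lemma \ref{LemmaSRConnected} converts connectedness of $\Proj(R)$ into $\depth(R)\geq 2$, and then Corollary \ref{CorBoundDim} gives the bound. The paper's own proof even (superfluously) cites the $F$-pure analogue \ref{LemmaFpureConnected} alongside these, so your remark about applying the correct lemma is well taken.
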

\begin{proof}
This result follows from  \autoref{LemmaSRConnected}, \autoref{LemmaFpureConnected}, and \autoref{CorBoundDim}.
\end{proof}

\begin{theorem}\label{ThmBoundFpure}
Suppose that $R$ is an $F$-pure graded algebra over a separably closed field.
If $\Proj(R)$ is connected, then
$$
r_{R}(\ell)\leq \dim(R)+\ell-1.
$$
\end{theorem}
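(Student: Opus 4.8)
The plan is to deduce this theorem directly from the depth-based bound of \autoref{CorBoundDim} by converting the topological hypothesis on $\Proj(R)$ into the algebraic hypothesis $\depth(R)\geq 2$. Since $R$ is assumed $F$-pure and $\KK$ is separably closed, the two hypotheses needed downstream are already in place; the only thing left to supply is the depth bound, after which the inequality is immediate.

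First I would invoke \autoref{LemmaFpureConnected}, whose hypotheses—$\KK$ separably closed and $R$ an $F$-pure quotient $S/I$—match ours exactly. That lemma's proof establishes the chain $\depth(\widehat{R})\geq 2 \iff H^{n-1}_{I}(\widehat{S})=0 \iff \Spec^\circ(\widehat{R})\text{ connected}$, together with $\depth(R)=\depth(\widehat{R})$ and the equivalence between connectedness of $\Proj(R)$ and connectedness of the punctured spectrum $\Spec^\circ(\widehat{R})$. From the assumption that $\Proj(R)$ is connected I therefore conclude $\depth(R)\geq 2$. (I read the displayed conclusion ``$\depth(R)\geq 0$'' in \autoref{LemmaFpureConnected} as a typographical slip for $\depth(R)\geq 2$, which is what its proof actually delivers.)

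Once $\depth(R)\geq 2$ is in hand, I apply \autoref{CorBoundDim} in its $F$-pure case: that corollary asserts precisely that for an $F$-pure ring with $\depth(R)\geq 2$ one has $r_{R}(\ell)\leq \dim(R)+\ell-1$, which is the desired conclusion. This mirrors the proof of \autoref{ThmBoundSR}, the Stanley-Reisner analogue, which combines the same corollary with the connectedness lemma.

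The proof itself is thus a short combination of existing results, and the genuine work has already been absorbed into \autoref{LemmaFpureConnected}; there is no real obstacle at the level of this theorem. The point requiring care lives upstream, in that lemma: the identification $F\text{-}\depth(R)=\depth(R)$ for $F$-pure rings, which is what unlocks the Lyubeznik--Van Tuyl vanishing criterion, and the Peskine--Szpiro connectedness theorem relating the vanishing of $H^{n-1}_{I}(\widehat{S})$ to connectedness of the punctured spectrum. I would verify that the $F$-pure hypothesis is exactly what legitimizes both the passage to completion and the equality of $F$-depth with ordinary depth, so that the separably-closed and $F$-pure assumptions are genuinely used here rather than merely inherited.
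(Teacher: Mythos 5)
Your proposal is correct and follows essentially the same route as the paper, which proves \autoref{ThmBoundFpure} by combining \autoref{LemmaFpureConnected} with \autoref{CorBoundDim}; you also rightly identify the displayed ``$\depth(R)\geq 0$'' in \autoref{LemmaFpureConnected} as a typographical slip for $\depth(R)\geq 2$, which is what its proof establishes and what the application requires.
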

\begin{proof}
This result follows from  \autoref{LemmaSRConnected}, \autoref{LemmaFpureConnected}, and \autoref{CorBoundDim}.
\end{proof}

\bibliographystyle{alpha}
\bibliography{References}

\end{document}